\def\Z{\mathbb{Z}}
\DeclareMathOperator{\Mod}{mod}
\DeclareMathOperator{\Tr}{Tr}
\DeclareMathOperator{\rk}{rk}
\def\cR{\mathcal{R}}
\def\bF{\mathbb{F}}
\newtheorem{thm}{Theorem}[section]
\newtheorem{pro}[thm]{Proposition}
\newtheorem{lem}[thm]{Lemma}
\newtheorem{cor}[thm]{Corollary}
\newtheorem{rem}[thm]{Remark}
\title{Matrix Kloosterman sums modulo prime powers}
\author{M. Erd\'elyi, \'A. T\'oth, G. Z\'abr\'adi}
\date{\today}
\begin{document}

\maketitle

\begin{abstract}
We give optimal bounds for matrix Kloosterman sums modulo prime powers extending earlier work of the first two authors on the case of prime moduli. These exponential sums arise in the theory of the horocyclic flow on $GL_n$.
\end{abstract}

\section{Introduction}

The purpose of this paper is to give good upper bounds for the sums
\begin{equation}\label{eq:K_n}
  K_n(A,B; p^k)=\sum_{X\in GL_n(\Z/p^k\Z)} 
\psi( (AX+X^{-1}B)/p^k), 
\end{equation}
with given $A,B\in \mathbb{Z}^{n\times n}$ where for an \(n\times n\) matrix  \(X\) we let \(\psi(X)=e^{2 \pi i \Tr X}\). Note that \(\psi(XY)=\psi(YX)\). A good upper bound may mean different things, it could be optimal, or somewhat crude but easily  usable, and we will provide both. 

These sums themselves are of independent interest. They arise naturally in certain equi-distribution problems and are natural analogues of the classical Kloosterman sums \(\sum_{x\ (p)} e^{2\pi i (ax+bx^{-1})/p}\). 

In our earlier paper \cite{E-T} we dealt with the case \(k=1\). As usual upper bounds modulo a prime require the heavy machinery of some type of Weil cohomology. For higher prime powers the methods are usually of a very different sort, based on Taylor expansions, and occasionally referred to as the stationary phase \cite{Dabrowksi-Fisher}. One such example is provided by Salie's explicit evaluation of the one-dimensional case in \cite{Salie}. We will prove that such a result holds generically even for \(n\times n\) matrices, but generic here is much more restricted than being non-zero, or even invertible.

We summarize the main results. Clearly if \(p\) divides all matrix entries of \(A\) and \(B\) then one may clear appropriate powers and either arrive at a trivial sum, or one where one of \(A\) or \(B\) is different from the zero-matrix. From now on we will assume that this is the case and denote it as
\[
\gcd(A,B,p)=1.
\]

First we have the following reductions to a counting argument.

\begin{pro}\label{pro:red} 
Assume \(\gcd(A,B,p)=1\) and that \(k>1\). 
\begin{enumerate}
    \item If \(k = 2l\) then
    \[
    K_n(A,B; p^k)=
    \sum_{X}  
\psi( (AX+X^{-1}B)/p^k)
    \]
    where the sum is over \(X\in GL_n(\Z/p^k\Z)\) satisfying \(XAX\equiv B \mod p^l\).
    
    \item If \(k=2l+1\) then
     \[
    K_n(A,B; p^k)=\frac{1}{p^{n^2}} 
    \sum_{X} \psi( (AX+X^{-1}B)/p^k)S_{A,B}(X;p)
    \]
    where the sum is over \(X\in GL_n(\Z/p^k\Z)\), \(XAX\equiv B \mod p^l\), and where \[ S_{A,B}(X;p) = \sum_{U \Mod p} \psi((SU+TU^2)/p) 
    ,\] 
   with \(S=(AX-X^{-1}B)/p^l\) and \(T=AX\). 
    
\end{enumerate}
\end{pro}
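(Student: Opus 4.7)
The plan is to introduce a multiplicative smoothing change of variable. For any $Z\in M_n(\Z/p^{k-l}\Z)$, the matrix $I+p^l Z$ is a unit of $M_n(\Z/p^k\Z)$ (its determinant is $\equiv1\pmod p$), so $X\mapsto X(I+p^l Z)$ is a bijection on $GL_n(\Z/p^k\Z)$ and leaves $K_n$ unchanged. Averaging over all such $Z$ therefore yields
\[
K_n(A,B,p^k)=\frac{1}{p^{(k-l)n^2}}\sum_{X\in GL_n(\Z/p^k\Z)}\sum_{Z\in M_n(\Z/p^{k-l}\Z)}\psi\bigl((AX(I+p^l Z)+(I+p^l Z)^{-1}X^{-1}B)/p^k\bigr).
\]
Since $k>1$ forces $l\ge 1$, one has $p^{3l}\equiv 0\pmod{p^k}$, so the geometric expansion $(I+p^l Z)^{-1}\equiv I-p^l Z+p^{2l}Z^2\pmod{p^k}$ terminates. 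After expanding and using cyclicity of the trace to combine $\Tr(AXZ)-\Tr(ZX^{-1}B)=\Tr((AX-X^{-1}B)Z)$, the phase reads
\[
\frac{\Tr(AX+X^{-1}B)}{p^k}+\frac{\Tr((AX-X^{-1}B)Z)}{p^{k-l}}+\frac{\Tr(Z^2 X^{-1}B)}{p^{k-2l}}\pmod 1.
\]

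For part (1), $k=2l$, the quadratic denominator is $p^0=1$, so that term is trivial. The inner sum over $Z\in M_n(\Z/p^l\Z)$ of the linear phase is a standard orthogonality sum, producing $p^{ln^2}$ when $AX\equiv X^{-1}B\pmod{p^l}$ (equivalently $XAX\equiv B\pmod{p^l}$) and $0$ otherwise. This factor $p^{ln^2}$ cancels the prefactor $p^{-ln^2}$ and gives the claimed identity.

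For part (2), $k=2l+1$, split $Z=U+pU_1$ with $U\in M_n(\Z/p\Z)$ and $U_1\in M_n(\Z/p^l\Z)$. Because $l\ge 1$, the cross and higher terms in $Z^2=U^2+p(UU_1+U_1U)+p^2U_1^2$ contribute integer traces when divided by $p$, so the quadratic phase reduces to $\Tr(U^2 X^{-1}B)/p$. The linear phase splits as $\Tr((AX-X^{-1}B)U)/p^{l+1}+\Tr((AX-X^{-1}B)U_1)/p^l$. Performing the $U_1$-sum first enforces $XAX\equiv B\pmod{p^l}$ and contributes a factor $p^{ln^2}$. Under this congruence, $S=(AX-X^{-1}B)/p^l$ is well-defined and $X^{-1}B\equiv AX\pmod{p^l}$, which lets one replace $X^{-1}B$ by $T=AX$ in the quadratic phase (the discrepancy is a multiple of $p^{l-1}$, hence an integer after dividing by $p$). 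The remaining sum over $U\pmod p$ is exactly $S_{A,B}(X;p)$, and the net prefactor $p^{-(l+1)n^2}\cdot p^{ln^2}=p^{-n^2}$ completes the formula.

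The main subtlety will be the careful bookkeeping of which auxiliary pieces of $Z$ and $Z^2$ actually contribute modulo $1$; everything ultimately hinges on $p^{3l}\equiv 0\pmod{p^k}$ and $p^{l-1}\in\Z$, both of which are equivalent to $l\ge 1$, i.e.\ the hypothesis $k>1$.
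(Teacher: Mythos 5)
Your argument is correct, and it rests on the same basic device as the paper: averaging over the congruence subgroup $H=\{I+p^lZ\,:\,Z\in M_n(\Z/p^{k-l}\Z)\}$ of $GL_n(\Z/p^k\Z)$, whose elements admit a terminating Neumann series for the inverse precisely because $l\ge 1$. For part (1) your computation is identical to the paper's. For part (2), however, you organize the inner sum over $H$ differently, and the difference is not cosmetic. The paper writes $U=I+U_1p^l+U_2p^{2l}$ with $U_1$ running $\bmod\ p^l$ and $U_2$ running $\bmod\ p$, performs the $U_2$-sum first (which only enforces the \emph{weak} congruence $AX\equiv X^{-1}B\bmod p$), and is then left with a genuine matrix Gauss sum $S_1(X)$ over $U_1$; it then needs a translation-invariance argument ($S_1(X)=\psi(MV/p^{l-1})S_1(X)$ for all $V$) to show that $S_1$ vanishes unless the \emph{strong} congruence $\bmod\ p^l$ holds and to peel off the factor $p^{(l-1)n^2}S_{A,B}(X;p)$, treating $l=1$ separately. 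You instead split $Z=U+pU_1$ with $U\bmod\ p$ and $U_1\bmod\ p^l$, i.e.\ you separate the single $p^l$-digit from the higher digits rather than singling out the topmost $p^{2l}$-digit. Because all quadratic cross-terms involving $U_1$ carry an extra $p$ and hence contribute integer traces, the $U_1$-phase is purely linear, so one orthogonality step produces the full constraint $\bmod\ p^l$ at once, and the $U$-sum is $S_{A,B}(X;p)$ on the nose (after replacing $X^{-1}B$ by $T=AX$ modulo $p$, which is legitimate under the constraint since $l\ge1$). This sidesteps the paper's vanishing argument and the separate $l=1$ base case. Both routes are valid; yours is a bit tidier, while the paper's two-stage version makes the $l=1$ instance of the formula visible before the general case.
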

\begin{rem} 
Note that if \(p \neq 2\), then \(S_{A,B}(X;p)\) is either 0 or equals \(G(AX;p)\), for the generalized Gauss sum
    \[
    G(T;p)=\sum_{U \, \Mod \,p} \psi(TU^2/p)
    \]
See below in Section \ref{sec:background}.    \end{rem}
The case when \(A\) is invertible modulo \(p\) is special and can be made more explicit, in complete analogue of Salie's evaluation of the classical one dimensional Kloosterman sum \cite{Salie}.
\begin{cor}\label{cor:Salie-type}
\begin{enumerate}
\item \(K_n(A,B;p^k)=0\) unless the invariant factors, (the Smith normal forms), of \(A\) and \(B\) agree up to \(\Mod p^l\), where \(l = [k/2]\). 

In particular if \(k>1\) and \(\gcd(\det A,p)=1\) (ie.\ $A$ is invertible mod $p$) then \(K_n(A,B;p^k)=0\) unless \(\gcd(\det B,p)=1\) as well. 

\item Assume that \(\gcd(\det(AB),p)=1\) and that \(AB\) is regular semisimple \(\Mod p\), (i.e. all eigenvalues are different). Then
\begin{enumerate}
    \item If \(k = 2l\) then
    \[
    K_n(A,B; p^k)=
    p^{k n^2/2}\sum_{Y} 
\psi( 2Y/p^k)
    \]
    where the sum is over \(Y\in GL_n(\Z/p\Z)\), \(Y^2\equiv AB \mod p^k\).
    \item If \(k = 2l+1\) then
     \[
    K_n(A,B; p^k)=
    \zeta p^{k n^2/2}\sum_{Y} 
\psi(2Y/p^k)
    \]
    where the sum is over \(Y\in GL_n(\Z/p\Z)\), \(Y^2\equiv AB \mod p^k\), and where \(\zeta\) is a \(p\)-th root of unity.
 \item In particular we have \(|K_n(A,B; p^k)| \leq 2^n p^{k n^2/2} \).   
\end{enumerate}

\end{enumerate}

\end{cor}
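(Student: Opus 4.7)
The plan is to apply Proposition \ref{pro:red}, change variables to $Y = AX$, and invoke Hensel's lemma for square roots of $AB$ modulo $p^k$. Part (1) is immediate: the reduced sum in Proposition \ref{pro:red} is over $X \in GL_n(\Z/p^k\Z)$ with $XAX \equiv B \pmod{p^l}$, and the existence of any such $X$ gives a two-sided equivalence $\bar X A \bar X \equiv B \pmod{p^l}$ over $\Z/p^l\Z$; hence $A$ and $B$ share Smith normal forms modulo $p^l$, and otherwise the sum is empty and $K_n=0$. When $\gcd(\det A, p) = 1$, the Smith form of $A$ is trivial, forcing that of $B$ modulo $p^l$ to be trivial, whence $\gcd(\det B, p) = 1$.

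For part (2), $\gcd(\det(AB), p) = 1$ makes both $A$ and $B$ invertible modulo $p^k$, so $Y = AX$ turns the restriction into $Y^2 \equiv AB \pmod{p^l}$. Since $AB$ is regular semisimple modulo $p$ and $p$ is odd, the Jacobian $2Y_0$ of $Y^2 = AB$ is invertible modulo $p$, so Hensel's lemma lifts each square root of $AB$ modulo $p$ uniquely to an exact square root $Y_0 \in GL_n(\Z/p^k\Z)$; there are at most $2^n$ such $Y_0$. Any $Y$ with $Y^2 \equiv AB \pmod{p^l}$ is then uniquely of the form $Y_0 + p^l Z$ with $Z \in \mathrm{Mat}_n(\Z/p^{k-l}\Z)$, and the sum reorganizes as a double sum over $(Y_0, Z)$. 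For $k = 2l$, Taylor expansion using $Y_0^2 \equiv AB \pmod{p^k}$ and $Y^{-1} \equiv Y_0^{-1} \pmod{p^l}$ gives
\[
Y + Y^{-1} A B \equiv 2 Y_0 + p^l (Z - Y_0^{-1} Z Y_0) \pmod{p^{2l}},
\]
and the cyclic identity $\Tr(Y_0^{-1} Z Y_0) = \Tr(Z)$ makes the $p^l$ correction trace-zero. Thus $\psi((Y+Y^{-1}AB)/p^k) = \psi(2Y_0/p^k)$ is independent of $Z$, and summing over the $p^{l n^2} = p^{k n^2/2}$ choices of $Z$ and the at most $2^n$ square roots $Y_0$ yields (a).

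For $k = 2l+1$, Proposition \ref{pro:red}(2) introduces the Gauss-type sum $S_{A,B}(X;p) = \sum_U \psi((SU+TU^2)/p)$, and one more order of expansion is required. The same parametrization produces $T \equiv Y_0 \pmod{p}$, $S \equiv Z + Y_0^{-1} Z Y_0 \pmod{p}$, and (after a cyclic trace identity) $\psi((Y+Y^{-1}AB)/p^k) = \psi(2Y_0/p^k)\,\psi(Y_0^{-1} Z^2/p)$, the middle-order term again being trace-zero. After replacing the sum over $Z \in \mathrm{Mat}_n(\Z/p^{l+1}\Z)$ by $p^{l n^2}$ times the sum over $Z \bmod p$ and interchanging the order of the $(Z \bmod p, U)$ sums, the inner $Z$-sum is a matrix Gauss sum with a linear term $\psi(ZM/p)$ for $M = U + Y_0 U Y_0^{-1}$; completing the square via $Z \mapsto Z - Y_0 U$ yields $\psi(-Y_0 U^2/p)\, G(Y_0^{-1};p)$. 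This factor cancels the $\psi(Y_0 U^2/p)$ from $S_{A,B}$, the remaining $U$-sum is $p^{n^2}$, and dividing by the $p^{n^2}$ normalization of Proposition \ref{pro:red}(2) leaves
\[
K_n(A,B,p^k) = p^{l n^2} \sum_{Y_0} G(Y_0^{-1};p)\, \psi(2Y_0/p^k).
\]
Standard evaluation of the nondegenerate matrix Gauss sum gives $G(Y_0^{-1};p) = \zeta p^{n^2/2}$ with $|\zeta|=1$, producing (b); (c) follows from $|\zeta| = 1$, the at most $2^n$ square roots, and the triangle inequality.

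The main obstacle is the bookkeeping in the $k = 2l+1$ case: three separate Taylor expansions must be tracked to the correct order, and one must verify that the cross terms between $\psi((Y+Y^{-1}AB)/p^k)$ and $S_{A,B}(X;p)$ cancel cleanly, so that only a pure matrix Gauss sum $G(Y_0^{-1};p)$ survives. The even case, by contrast, reduces immediately via the single trace-zero cancellation.
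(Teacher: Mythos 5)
Your proof is correct and follows the same route the paper intends: substitute \(Y=AX\), use the regular-semisimple hypothesis via the invertibility of the Sylvester map \(Z\mapsto Y_0Z+ZY_0\) (Proposition \ref{prop:Sylvester-kernel}, which is exactly the Hensel condition) to parametrize the summation set by exact square roots \(Y_0\) of \(AB\) plus \(p^lZ\), Taylor-expand, and let cyclicity of the trace annihilate the order-\(p^l\) term; the paper dismisses this as ``an easy corollary of the calculations done in previous two subsections,'' and you simply carry out that bookkeeping explicitly, including the completing-the-square in \(Z\) that makes \(\psi(-Y_0U^2/p)\psi(Y_0U^2/p)=1\) in the odd-\(k\) case.

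One caveat worth flagging — a discrepancy with the corollary as stated rather than a gap in your argument: what you actually derive in (2b) is \(K_n(A,B,p^k)=p^{ln^2}\sum_{Y_0}G(Y_0^{-1};p)\,\psi(2Y_0/p^k)\) with \(G(Y_0^{-1};p)=\zeta_{Y_0}\,p^{n^2/2}\), where \(\zeta_{Y_0}\) is a Legendre symbol times an eighth root of unity that can depend on the particular square root \(Y_0\) (already for \(n=1\), \(p\equiv 3\pmod 4\) the two roots \(\pm c\) produce opposite Legendre symbols), and is not literally a \(p\)-th root of unity as the paper's phrasing suggests. Only \(|\zeta_{Y_0}|=1\) is needed for the bound in (2c), and you do establish that, so this is an imprecision in the paper's statement rather than a defect in your proof.
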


Note that in the regular semisimple case we have square root cancellation. Also note that  we do not assume that the eigenvalues of \(AB\) are defined over \(\mathbb{F}_p\). Finally, this condition is generic, its complement is a Zariski closed set.

\medskip
We now return to the non-generic cases. By Proposition \ref{pro:red} in order to bound the sums in \ref{eq:K_n} we need to bound 
\begin{equation}\label{eq:N^*(A,B)-def}
N^*(A,B;p^l) =  \# \{X \in GL_n(\Z/p^l\Z)|AX\equiv X^{-1}B \Mod p^l \}\ .
\end{equation}
 
Note that \[ N^*(A,B;p^l) = N^*(XA,BX^{-1};p^l)\] for any \(X\in GL_n(\Z/p^l\Z)\) and so if \(N^*(A,B;p^l)\) is not zero, then it equals \(N^*(C,C;p^l)\) for \(C=XA\), for any \(X\) for which \(XA\equiv BX^{-1}\pmod{p^l}\).

For \(l=1\) it is possible (see Thm.\ \ref{thm:quadr-matrix-explicit} below) to describe \(N^*(C,C;p)\) explicitly. This allows one to show that for  a well defined exponent \(e=e_C\) we have \(m_n<N^*(C,C;p)/p^e<M_n\) for some absolute constants \(m_n,M_n\) that depend on \(n\) only. The exponent \(e=e_C\) itself depends on the combinatorial type of the Jordan decomposition of \(C\) over an algebraic closure of \(\mathbb{F}_p\).

As a first step towards this goal we have the following reduction.

\begin{pro}\label{prop:primer-decomp}
For any $C\in \mathbb{F}_p^{n\times n}$ let $m_{C^2}(x)=\prod_{j=1}^r f_j(x)^{k_j} \in\mathbb{F}_p[x]$ be the minimal polynomial of $C^2$, where the $f_j(x)\in \mathbb{F}_p[x]$ are irreducible. Let \(V_j=\ker f_j(C^2)^{k_j}\) and \(C_j=C_{|V_j}\) be the restriction of \(C\) to \(V_j\). Then we have
\begin{equation}\label{eq:counting-primer-decomposition}
N^*(C,C;p)=\prod_{j=1}^r N^*(C_j,C_j;p)\ .
\end{equation}
\end{pro}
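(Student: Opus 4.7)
The plan is to show that every matrix $X$ counted by $N^*(C,C,p)$ automatically commutes with $C^2$; this forces $X$ to respect the primary decomposition of $\mathbb{F}_p^n$ associated to $C^2$, after which the count factors blockwise.

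First, I would rewrite the defining condition $CX\equiv X^{-1}C\pmod p$ in the two equivalent forms $XCX=C$ and $XC=CX^{-1}$. The key observation is then the one-line computation
\[
XC^2=(XC)C=(CX^{-1})C=C(X^{-1}C)=C(CX)=C^2X,
\]
using $XC=CX^{-1}$ at the second step and the original $CX=X^{-1}C$ at the fourth. Hence every admissible $X$ lies in the commutant of $C^2$.

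Next, I would invoke the primary decomposition $\mathbb{F}_p^n=\bigoplus_{j=1}^{r}V_j$ with $V_j=\ker f_j(C^2)^{k_j}$, determined by $m_{C^2}(x)=\prod_j f_j(x)^{k_j}$. Any operator commuting with $C^2$ preserves each $V_j$, so by the previous step $X$ does, and $C$ obviously does. Consequently, in a basis adapted to this decomposition both matrices are block-diagonal, $X=X_1\oplus\cdots\oplus X_r$ and $C=C_1\oplus\cdots\oplus C_r$ with $X_j=X|_{V_j}$ and $C_j=C|_{V_j}$. The defining equation then splits into $C_jX_j=X_j^{-1}C_j$ on each $V_j$, invertibility of $X$ is equivalent to invertibility of every $X_j$ on $V_j$, and, conversely, any tuple $(X_j)_j$ of blockwise solutions assembles into an admissible $X$. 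This bijection between solution sets yields \eqref{eq:counting-primer-decomposition}.

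The main obstacle, and really the only substantive point, is the first step: the algebraic identity that $XCX=C$ forces $X$ to commute with $C^2$. Once this is in hand, everything reduces to a textbook application of the primary decomposition theorem over $\mathbb{F}_p$, and no delicate counting is involved.
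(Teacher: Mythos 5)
Your proof is correct and takes essentially the same approach as the paper: both establish that $XCX=C$ with $X$ invertible forces $XC^2=C^2X$ (the paper writes $C^2X=C(CX)=CX^{-1}C=XC^2$, your computation is the same manipulation), and then apply the primary decomposition of $C^2$ to split the equation blockwise.
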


In case of a primary minimal polynomial the explicit counting formulas depend on the value of \(\left(\frac{x}{f(x)}\right)\)  where as usual \cite{Rosen} \(\left(\frac{x}{f(x)}\right)\) is the quadratic residue symbol, defined for an irreducible polynomial \(f\), 
\[
\left(\frac{g(x)}{f(x)}\right)=\begin{cases} 
1 & \text{ if } g \text{ is a non-zero square in } \mathbb{F}_p[x]/(f), \\
-1 & \text{ if } g \text{ is a not a square in } \mathbb{F}_p[x]/(f), \\
0 & \text{ if } f|g\ .\end{cases} 
\]

To state our main result we need to introduce further notation the details of which are presented in Section \ref{sec:background}. For any partition \(\lambda=[n_1,...,n_k]\) we let \(N_\lambda\) be a nilpotent matrix with Jordan blocks of size \(n_1,...,n_k\). For \(q=p^d\) let \(\mathbb{F}_q\) be the field with \(q\) elements, and   
\(Z_{GL_{|\lambda|}(\mathbb{F}_q)}(N_\lambda)\) be the centralizer of  \(N_\lambda\) in the group \(GL_{|\lambda|}(\mathbb{F}_q)\) where $|\lambda|:=n_1+\dots+n_k$. (See Proposition \ref{prop:centralizer}.)

If we have partitions \(\mu, \nu\) we let \(\lambda=\mu+\nu\) be their join. Also the dual partition \(\lambda'\) of \(\lambda\) may be defined via the matrix \(N_\lambda\) as \(\lambda'=[d_1,...,d_k]\), \(d_1\geq d_2\geq \dots\geq d_k\) where \[\dim \ker N_\lambda^j=d_1+...+d_j.\]

\begin{thm}\label{thm:quadr-matrix-explicit} 
Assume that \(p\neq 2\). Let \(C\) be a  \(m \times m\) matrix  and assume that the minimal polynomial of \(C^2\)  is of the form \(f(x)^k\), where \(f(x)\in \mathbb{F}_p[x]\) is irreducible. We let \(q=p^d\), \(d={\deg f}\), and \(\lambda\) be the partition of $m/d$ with dual \(\lambda'=[d_1,...,d_k]\), \(d_1\geq d_2\geq \dots\geq d_k\) where 
\[
\frac{1}{d}
\dim_{\mathbb{F}_p} \ker f^j(C^2)=d_1+...+d_j.
\]

\begin{enumerate}
\item If \(\left(\frac{x}{f(x)}\right)=1\) we have \(N^*(C,C,p)= N_+^*(\lambda,q)\), where \(q=p^{\deg f}\) and   
\begin{equation}\label{eq:sqrt-count-of-square}
N^*_+(\lambda,q)=\sum_{\lambda=\mu+\nu} 
\frac{\# Z_{GL_{|\lambda|}(\mathbb{F}_q)}(N_\lambda)}{\# Z_{GL_{|\mu|}(\mathbb{F}_q)}(N_\mu) \# Z_{GL_{|\nu|}(\mathbb{F}_q)}(N_\nu)}\ .
\end{equation}
{\color{red} }

\item If \(\left(\frac{x}{f(x)}\right)=-1\) then all \(d_j\) will be even, and $\lambda=\mu+\mu$ for some partition $\mu$. Then we have \(N^*(C,C,p)= N^*_-(\lambda,q)\) where  
\begin{equation}\label{eq:sqrt-count-of-nonsquare}
N^*_-(\lambda,q)=\frac{\# Z_{GL_{|\lambda|}(\mathbb{F}_q)}(N_\lambda)}{\# Z_{GL_{|\mu|}(\mathbb{F}_{q^2})}(N_\mu)}\ .
\end{equation}

\item If \(f(x)=x\) we have 
\begin{equation}\label{eq:counting-nilpotent}   
 N^*(C,C;p)=
p^{\sum_{j=1}^k d_j^2}\prod_{j=1}^k\prod_{t_j=1}^{r_j}\left(1-\frac{1}{p^{t_j}}\right)
\end{equation}
where we put $r_j:=d_j-d_{j+1}$ ($j=1,\dots,k$ where $d_{k+1}:=0$), ie.\ $r_j$ is the number of blocks of size $j\times j$ ($j=1,\dots,k$) in the Jordan normal form of $C$. 
\end{enumerate}

\end{thm}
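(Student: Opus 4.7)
Write $V=\mathbb{F}_p^m$; by the definition of $N^*$, we need to count $X\in\mathrm{GL}(V)$ satisfying $XCX=C$. In cases (1) and (2) we have $f(x)\neq x$, so $C^2$ (hence $C$) is invertible modulo $p$, and $Y:=CX$ is a self-bijection of $\mathrm{GL}(V)$ sending the equation $XCX=C$ to $Y^2=C^2$. Thus these cases reduce to counting invertible square roots of $C^2$. Setting $A:=\mathbb{F}_p[x]/(f(x)^k)\cong \mathbb{F}_q[t]/(t^k)$ (Hensel's lemma, since $f$ is separable), $V$ becomes an $A$-module of type $\lambda$ via the $C^2$-action, and any $Y$ with $Y^2=C^2$ commutes with $C^2$ and so is $A$-linear.

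\paragraph{Case (1).} Here $(x/f)=1$ and $p\neq2$, so Hensel's lemma produces a square root $\beta\in A$ of $\bar x$, and $A[y]/(y^2-\bar x)\cong A\times A$ via the idempotents $(1\pm y/\beta)/2$. An $A$-linear $Y$ with $Y^2=\bar x$ thus corresponds to an $A$-submodule decomposition $V=V_+\oplus V_-$ into the $\pm\beta$-eigenspaces. Writing $\mu,\nu$ for the $A$-types of $V_+,V_-$, we have $\mu+\nu=\lambda$. For each such $(\mu,\nu)$, orbit-stabilizer on $A$-linear isomorphisms from a fixed standard model $W_\mu\oplus W_\nu\to V$ shows the number of ordered decompositions equals
\[\frac{|\mathrm{Aut}_A(V)|}{|\mathrm{Aut}_A(W_\mu)|\cdot|\mathrm{Aut}_A(W_\nu)|}.\]
Identifying $|\mathrm{Aut}_A(W_\sigma)|$ as $|Z_{\mathrm{GL}_{|\sigma|}(\mathbb{F}_q)}(N_\sigma)|$ (centralizers of the nilpotent $t$-action) and summing over $(\mu,\nu)$ with $\mu+\nu=\lambda$ yields \eqref{eq:sqrt-count-of-square}.

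\paragraph{Case (2).} Now $(x/f)=-1$, so $\bar x$ is not a square in $A$, and $A':=A[y]/(y^2-\bar x)$ is an étale quadratic local extension $\cong\mathbb{F}_{q^2}[t]/(t^k)$. An $A$-linear $Y$ with $Y^2=\bar x$ is equivalent to an $A'$-module structure on $V$ extending the $A$-structure; such a structure exists iff $\lambda=\mu+\mu$, with $\mu$ the $A'$-type of $V$ over $\mathbb{F}_{q^2}$. The same orbit-stabilizer argument applied to $A$-linear isomorphisms from a fixed $A'$-model into $V$ gives the count $|\mathrm{Aut}_A(V)|/|\mathrm{Aut}_{A'}(V_0)|=|Z_{\mathrm{GL}_{|\lambda|}(\mathbb{F}_q)}(N_\lambda)|/|Z_{\mathrm{GL}_{|\mu|}(\mathbb{F}_{q^2})}(N_\mu)|$, proving \eqref{eq:sqrt-count-of-nonsquare}.

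\paragraph{Case (3) and the main obstacle.} When $f(x)=x$, the matrix $C$ is nilpotent, $C$ is not invertible, and the bijection $X\mapsto CX$ breaks down. Instead, I argue directly: from $XCX=C$ and its consequence $X^{-1}CX^{-1}=C$ (multiplying both sides by $X^{-1}$), one derives $XC=CX^{-1}$ and $CX=X^{-1}C$, which iterate to $C^jX=X^{(-1)^j}C^j$ for every $j\geq0$. Hence $X$ preserves the full kernel filtration $\ker C^j$, and in a Jordan basis adapted to this filtration, $X$ is block upper-triangular. The equation $XCX=C$ then decomposes into quadratic block equations: the diagonal blocks (one for each distinct Jordan block size $j$) yield a factor $\prod_j|\mathrm{GL}_{r_j}(\mathbb{F}_p)|$ after a suitable change of variables, while the strictly upper-triangular blocks form a free affine subspace of dimension $\sum_jd_j^2-\sum_jr_j^2$, contributing $p^{\sum d_j^2-\sum r_j^2}$. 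Together these give \eqref{eq:counting-nilpotent}, which is also equal to $|Z_{\mathrm{GL}_m(\mathbb{F}_p)}(C)|$. The main obstacle is the combinatorial bookkeeping in this last case: precisely identifying the diagonal-block equations as ordinary $\mathrm{GL}_{r_j}$ problems and verifying the dimension count for the free upper-triangular parameters. I expect to carry this out by induction on the partition $\lambda(C)$, peeling off the largest Jordan block and relating $N^*(C,C,p)$ to the count for the restriction of $C$ to a complementary $C$-invariant subspace.
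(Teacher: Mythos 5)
Your handling of the invertible cases is the same in substance as the paper's, just phrased in terms of modules over $A=\mathbb{F}_q[t]/(t^k)$ rather than via the Jordan--Chevalley decomposition $Y=Y_sY_u$ followed by $\mathbb{F}_q[T]$-module decompositions; the idempotent/étale-quadratic language in Cases (1) and (2) is a clean repackaging of the paper's Lemmas on eigenspace decompositions and Galois descent, and the orbit--stabilizer count is identical. (One small thing worth saying explicitly in Case (2): the conclusion that $\lambda=\mu+\mu$ follows because $V$ is a module over $\mathbb{F}_{q^2}[t]/(t^k)$ and the restriction of scalars to $\mathbb{F}_q[t]/(t^k)$ doubles each elementary divisor; the paper derives this from $\varphi(U)\cong U$.)

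Case (3) is where there is a genuine gap. You only invoke $X$-invariance of the kernel filtration $\ker C^j$, but that flag alone gives a parabolic with diagonal blocks of sizes $d_1,\dots,d_k$, not $r_1,\dots,r_k$, so it cannot directly produce the factor $\prod_j|\mathrm{GL}_{r_j}(\mathbb{F}_p)|$ nor explain why the off-diagonal parameters are \emph{free}. The paper's argument needs the stronger fact (their Lemma~\ref{X-invariant-Ker-Im}) that \emph{both} $\ker(C^i)$ and $\mathrm{Im}(C^j)$ are $X$-invariant, hence the refined flag $\ker(C)\cap\mathrm{Im}(C^{i-1})$ inside $\ker C$ is preserved: it is this flag, not the kernel filtration of $V$, whose parabolic is $P_{(r_1,\dots,r_k)}$ of order $\prod_j|\mathrm{GL}_{r_j}|\cdot p^{\sum_{i<j}r_ir_j}$. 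One then needs a careful inductive count of extensions from $\ker(C^{t-1})$ to $\ker(C^t)$ compatible with \emph{all} the invariant subspaces $\ker(C^i)\cap\mathrm{Im}(C^j)$; this is precisely the inner lemma in the paper's Proposition~\ref{formula-nilpotent}, and it is nontrivial (the freedom at each step is exactly one coset of $\ker(C)\cap\mathrm{Im}(C^j)$, not an unconstrained block). Your proposal states the desired numerology and acknowledges it has not been verified (``I expect to carry this out...''); as written it has not established that $XCX=C$ imposes no further constraints on the strictly upper-triangular part. Your observation that the count equals $\#Z_{\mathrm{GL}_m(\mathbb{F}_p)}(C)$ is numerically correct (compare Proposition~\ref{prop:centralizer}) but is a consequence of the formula, not a route to it.
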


We are now ready to state our main bounds. 
For a refined statement we need the stable rank of a matrix \(A\) defined as \(\rk_\infty A = \lim_{m\to \infty} \rk A^m\). 

\begin{thm}\label{thm:quadr-matrix-bound}
Assume that \(p\neq 2\) and \(l\geq 1\).
\begin{enumerate} 
    \item Let \(r=\rk (C \Mod p)\), \(r_\infty=\rk_\infty (C \Mod p)\), \ 
\[
N^*(C,C;p^l)\leq 
2^{ r_\infty} p^{ e(l,n,r,r_\infty)}
\]
where
\[
e(l,n,r,r_\infty)= (n-r)^2 + (r-r_\infty)^2 + r_\infty^2/2 +(l-1)\left((n-r)(n-r_\infty) + r_\infty^2/2\right).
\]
\item Assume that \(r=\rk (A \Mod p)=\rk (B \Mod p)>0\). We have
\begin{equation}\label{thm:1.6-part2}
 N^*( A,B; p^l) \leq 
2^{r}
\begin{cases}
p^{ln(n-r)} & \text{ if } \ r\leq n/2\\
p^{ln^2/2 } & \text{ if } \ n/2<r\leq n
\end{cases}.   
\end{equation}

\item In particular if \(n>1\) and \( \gcd(A,B,p)=1\) then \(N^*(A,B;p^l) \leq 2^n p^{l(n^2-n)}\). If \( \gcd(\det A,\det B,p)=1\) then 
\(N^*(A,B;p^l) \leq 2^n p^{ln^2/2}\). 
\end{enumerate}
\end{thm}

\begin{rem}
The case \(p=2\) is special, in view of (\ref{eq:Sylvester-improved-bound}). Assume 
\( AX=X^{-1}B=C \Mod p \). If \(C \neq I_n\), in particular, if \( AB\neq I_n \Mod 2 \),   the bound \(2^{l(n^2-n)}\) still holds.
\end{rem}
We also need the following general bound for the sum \(S_{A,B}(X;p)\).

\begin{pro}\label{pro:matrix-gauss}
Assume that \(A,B\) are such that there exists \(X\) so that \(AX\equiv X^{-1}B \Mod p^l\). Let 
\[ S_{A,B}(X;p) = \sum_{U \Mod p} \psi((SU+TU^2)/p) 
   \ ,\] 
   with \(S=(AX-X^{-1}B)/p^l\) and \(T=AX \Mod p\). Assume that \(p \neq 2\). We have that
\[
|S_{A,B}(X;p)| \leq p^{(n-r)(n-r_\infty)+r_\infty^2/2}\ ,
\]
where \(r =\rk T\), \(r_\infty = \rk_\infty T\).  In particular, we always have $|S_{A,B}(X;p)| \leq p^{n^2-n}$ and under the additional assumption that $A$ is invertible, we have $|S_{A,B}(X;p)| \leq p^{n^2/2}$.
\end{pro}

In view of Proposition~\ref{pro:red} as a corollary of the above we have the following

\begin{thm}\label{thm:main}
Assume that \(n>1\) and the matrices \(A,B\) are not both \(0 \Mod p\). We then have the following bounds.

\begin{enumerate}
    \item If $k=1$, by \cite{E-T} \[|K_n(A,B;p)|\leq \begin{cases}
        2p^{n^2-n+1} & \text{ for all } A,B\\
        4p^{3n^2/4} & \text{ if } \gcd(\det A,\det B,p)=1\\
        4p^{n^2/2} & \text{ if } \gcd(\det B,p)=1, \text{ and } AB^{-1} \text{ regular semisimple}.
    \end{cases}\]
    \item If \(k>1\) we have  
    \[|K_n(A,B;p^k)|\leq 2^n \begin{cases}
        p^{kn^2-\lceil\frac{k}{2}\rceil n} & \text{ for all } A,B\\
        p^{kn^2-\lceil\frac{k}{2}\rceil n^2} & \text{ if } \gcd(\det A,\det B,p)=1.
    \end{cases}\]
\end{enumerate}
\end{thm}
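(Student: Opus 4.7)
Part (1) is the case $k=1$, which is the subject of our earlier paper \cite{E-T}; the plan is to invoke those bounds directly, noting that the Salie-type square-root-cancellation bound (iii) is also consistent with the $k=1$ analogue of Corollary \ref{cor:Salie-type}. The substantive work is part (2), where $k\geq 2$. Setting $l=\lfloor k/2\rfloor$, I would apply Proposition \ref{pro:red} to rewrite $K_n(A,B;p^k)$ as a sum over $X\in GL_n(\mathbb{Z}/p^k\mathbb{Z})$ satisfying $XAX\equiv B\pmod{p^l}$, weighted by $S_{A,B}(X;p)/p^{n^2}$ when $k=2l+1$, and then estimate term by term using Theorem \ref{thm:quadr-matrix-bound} and Proposition \ref{pro:matrix-gauss}.

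Since the constraint $XAX\equiv B\pmod{p^l}$ only depends on $X\bmod p^l$, the contributing set has size $p^{n^2(k-l)}\cdot N^*(A,B;p^l)$. For the general bound, plug in $N^*(A,B;p^l)\leq p^{l(n^2-2n+2)}$ from Theorem \ref{thm:quadr-matrix-bound}(2): in the even case $k=2l$ this immediately yields $|K_n(A,B;p^k)|\leq p^{n^2 l}\cdot p^{l(n^2-2n+2)} = p^{kn^2-l(2n-2)}$, matching the target since $\lceil k/2\rceil = l$. In the odd case $k=2l+1$, the same accounting together with the Gauss sum bound $|S_{A,B}(X;p)|\leq p^{(n-r)^2+r_\infty^2/2}$ from Proposition \ref{pro:matrix-gauss} (where $r=\rk(A\bmod p)$ and $r_\infty\leq r$) yields $|K_n(A,B;p^k)|\leq p^{kn^2-(l+1)(2n-2)}$ provided one can show $(n-r)^2+r_\infty^2/2\leq (n-1)^2+1$.

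This last inequality is the only point requiring attention. By Theorem \ref{thm:quadr-matrix-bound}(1) we may assume $r\geq 1$: otherwise $r=0$ means $A\equiv 0\bmod p$, and for $N^*$ to be nonzero we would need $\rk(B\bmod p)=0$ too, contradicting the standing assumption. The maximum of $(n-r)^2+r_\infty^2/2$ over $0\leq r_\infty\leq r$ is attained at $r_\infty=r$; the resulting convex quadratic $(n-r)^2+r^2/2$ on $[1,n]$ is maximized at the endpoints $r=1$ and $r=n$, both of which are bounded by $(n-1)^2+1$, the case $r=n$ reducing to $(n-2)^2\geq 0$. For the sharper coprime bound, the hypothesis $\gcd(\det A,\det B,p)=1$ combined with the forced equality $r=s$ gives $r=r_\infty=n$; then Theorem \ref{thm:quadr-matrix-bound}(2) gives $N^*\leq p^{ln^2/2}$ and Proposition \ref{pro:matrix-gauss} gives $|S_{A,B}(X;p)|\leq p^{n^2/2}$, and the same bookkeeping produces $p^{kn^2-\lceil k/2\rceil n^2/2}$.

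The main obstacle is not a single hard step but the careful accounting of exponents. The elementary inequality $(n-r)^2+r_\infty^2/2\leq (n-1)^2+1$ at the heart of the odd-$k$ general case is precisely what aligns the odd and even parities cleanly, producing the uniform $\lceil k/2\rceil(2n-2)$ saving off the trivial exponent $kn^2$.
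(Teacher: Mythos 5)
Your proposal is correct and follows essentially the same route as the paper: cite \cite{E-T} for $k=1$, then apply Proposition~\ref{pro:red} to reduce to counting $X$ with $XAX\equiv B\pmod{p^l}$, invoke Theorem~\ref{thm:quadr-matrix-bound} for $N^*(A,B;p^l)$, and (in the odd case) Proposition~\ref{pro:matrix-gauss} for the Gauss sum. You go a bit further than the paper's proof in spelling out the elementary inequality $(n-r)^2+r_\infty^2/2\leq (n-1)^2+1$ that makes the odd-$k$ exponent match the stated $\lceil k/2\rceil(2n-2)$, but that is just making explicit an arithmetic check the paper leaves implicit.
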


The paper is organized as follows. First in section 2 we prove Proposition \ref{pro:red}, this then gives the optimal bounds for the generic situation. In the next section we list some facts concerning partitions, the Sylvester equation and multivariable Gauss sums. These will be used in the following sections. First in Section \ref{sec:counting-bound-mod-p} we give 
upper bounds for the number of solutions of various quadratic equations in matrices modulo a prime. In the last section Section \ref{sec:general-bounds} we then prove the estimates in the last three statements above.  

While this work was in progress El-Baz, Lee and Strömbergsson \cite{E-B-L-S} independently arrived to quantitavely similar bounds in their work on the equidistribution of rational points on horocycles. While there are some overlaps the main results are different in nature.

\subsection{Acknowledgements}
Erd\'elyi was supported by the MTA–RI Lendület ``Momentum'' Analytic Number Theory and Representation Theory Research Group, and by NKFIH Research Grants FK-127906 and K-135885.

Tóth was supported by by the R\'enyi Institute Lend\"ulet Automorphic Research Group, by the NKFIH Research Grants K 119528 and K-135885.

Zábrádi was supported by the R\'enyi Institute Lend\"ulet Automorphic Research Group, by the MTA–RI Lendület ``Momentum'' Analytic Number Theory and Representation Theory Research Group, and by the NKFIH Research Grants FK-127906 and K-135885.

We thank the referee for a careful reading and helpful comments. We are also grateful to the authors of the paper \cite{E-B-L-S} who provided valuable suggestions for improvements in the presentation of the paper. We also thank them for many stimulating conversations during a Heilbronn research workshop on Effective equidistribution in homogeneous dynamics, that they organized at the University of Bristol.

\section{Reduction to counting}

In this section we prove Proposition \ref{pro:red} and its corollaries. As in the statement we need to deal with the case of even and odd exponents separately.

\subsection{The case \(p^{2l}\)}\hfill

Let \(k=2l\). For any unit \(U \in GL_n(\Z/p^k\Z)\) 
\(K_n(A,B; p^k)=
\sum_{X\in GL_n(\Z/p^k\Z)}
\psi((AXU+U^{-1}X^{-1}B)/p^k).\)

Let \(H < GL_n(\Z/p^k\Z)\) be the subgroup of matrices \(U\) such that \(U\equiv I \ (p^l)\). Explicitly we have
\[H=\{  I+U_1p^l \,|\, U_1 \ (p^l)\}\ .
\]
Since \((I+U_1p^l)^{-1} =I-U_1p^l\), we have  that
\[
K_n(A,B; p^k)=
\frac{1}{p^{l n^2}} 
\sum_{X \in GL_n(\Z/p^k \Z)}
\psi((AX+BX^{-1})/p^k)\sum_{U_1 \Mod p^l}
\psi((AXU+BUX^{-1})/p^l))\ .
\]
As noted above, \(\psi(BUX^{-1})= \psi(X^{-1}BU)\), and so the inner sum
\[
\sum_{U_1 \Mod p^l}
\psi((AX-X^{-1}B)U/p^l))
\]
vanishes, unless \(AX\equiv X^{-1}B \Mod p^l\). This proves the first claim in Proposition~\ref{pro:red}.

\subsection{The case \(p^{2l+1}\)}\hfill

Let \(k=2l+1\). We again use the subgroup \(H\) defined above which in this case consists of matrices \(U=I+U_1p^l+U_2p^{2l}\) where $U_1$ (resp.\ $U_2$) runs on $(\Z/p^l\Z)^{n\times n}$ (resp.\ on $(\Z/p\Z)^{n\times n}$), with inverse \[U^{-1}=I-U_1p^l+(U_1^2-U_2)p^{2l}.\] 
Therefore
\[
K_n(A,B; p^k)=\frac{1}{p^{n^2(l+1)}}\sum_{X\in GL_n(\Z/p^k\Z)}\sum_{U}
\psi((AXU+X^{-1}BU^{-1})/p^k)
\]
where \(U=I+U_1p^l+U_2p^{2l}\) is such that \(U_1\) will run \(\Mod p^l\) and \(U_2\) will run \(\Mod p\).

Now fix \(X \in GL_n(\Z/p^k\Z)\), and consider
\begin{align*}
\sum_{U}
\psi((AX(I+U_1p^l+U_2p^{2l}) + X^{-1}B(I-U_1p^l+(U_1^2-U_2)p^{2l}))/p^k)=\\
\psi( (AX+BX^{-1})/p^k) S_1(X)S_2(X)
\end{align*}
where 
\[
S_1(X)=\sum_{U_1 \ (p^l)} \psi((AX-X^{-1}B)U_1 +X^{-1}BU_1^2p^l)/p^{l+1})\ ,
\]
and
\[
S_2(X)=\sum_{U_2 \ (p)} \psi((AX-X^{-1}B)U_2/p)\ .
\]

Note that \(S_2=0\) unless \(AX\equiv X^{-1}B \Mod p\) in which case $S_2=p^{n^2}$.  
\(S_1\) is a Gauss sum in matrices, albeit a very special one. By the condition from \(S_2\) we have that \(AX-X^{-1}B=pM\) for some integral matrix \(M\), then we have
\[
S_1(X)=\sum_{U_1 \ (p^l)} \psi((MU_1 +TU_1^2)/p^l)\ ,
\]
where \(T \equiv X^{-1}Bp^{l-1}\equiv AXp^{l-1} \Mod p^l\). This gives the claim when \(l=1\). For \(l>1\) note that in view of \(pT \equiv 0 \Mod p^l\) we have for any \(V\) that

\begin{align*}
\sum_{U_1\ (p^l)} \psi((MU_1 +TU_1^2)/p^l)=
\sum_{U_1 \ (p^l)} \psi((M(U_1+pV) +T(U_1+pV)^2)/p^l)=\\
\psi(MV/p^{l-1})\sum_{U_1 \ (p^l)} \psi((MU +TU^2)/p^l)\ .
\end{align*}
and so that \(S_1(X)=\psi(MV/p^{l-1})S_1(X)\).
A suitable choice of \(V\) shows that \(S_1(X)=0\) unless \(M\equiv 0 \mod p^{l-1}\). In the original matrices \(A,B\) this is equivalent to \(AX \equiv X^{-1}B \Mod p^l\) in which case $S_1=p^{(l-1)n^2}S_{A,B}(X;p)$. This gives the second claim of Proposition \ref{pro:red}.

\subsection{The regular semisimple case.}

\begin{proof}[The proof of Corollary~\ref{cor:Salie-type}]
   
Note that if \(X\in GL_n(\Z/p^k\Z)\), \(AX\equiv X^{-1} B \Mod p^l\), and \(A\) is invertible then so is \(B\). Moreover if \(Y=AX\) then 
\(Y^2\equiv AB \Mod p^l\). Assume now that  \(Y^2\equiv AB \Mod p^l\) and let \(X=A^{-1}Y\).
Then \(AX\equiv Y \Mod p^l\) and \(X^{-1}B\equiv Y^{-1}AB\equiv Y \Mod p^l\). If \(AB=Y^2\) is regular semisimple then all the eigenvalues of \(Y\) are different and  no two of them sum to 0. This is exactly the condition (see subsection~\ref{subsec:Sylvester}) to modify \(Y\) by adding a suitable \(p^l Z\) in such a way that \(Y^2 \equiv AB \) hold 
\( \Mod p^k \) as well. In this case 
\[
K_n(A,B;p^k)=K_n(Y,Y;p^k)
\]
and the claim is an easy corollary of the calculations done in previous two subsections and the regularity of 
\(Y^2\). 
\end{proof}

\section{Technical background}\label{sec:background}

\subsection{Partitions}

A partition of an integer \(n\) is an ordered set \(\lambda=[n_{1}, n_{2}, \ldots, n_{r}]\), \(n_1\geq n_2 \geq \dots\geq n_r>0\), of integers satisfying \(\sum_i n_i =n\). We will write \( n=|\lambda|\). If \(\lambda\) and \(\mu\) are two partitions, \(\lambda+\mu\) is the partition obtained by taking the parts of \(\lambda\) and \(\mu\) together (and ordering them). We denote by \([n]\) the partition with one part \(n\). In general if a number \(j\) appears \(r_j\) times in \(\lambda\), the sequence \([...,j,...,j,...]\) will be replaced by \([...,j^{r_j},...]\), so for example the partition with \(n\) parts all equal to 1 is written as \([1^n]\).

Given a partition \(\lambda\) its associated Young (Ferrer) diagram has \(r\) rows with \(n_1,n_2,...n_r\) boxes in each row. For example for \(\lambda=[4,3,1]\) we have the diagram
\begin{center}
    \ydiagram{4,3, 1}
\end{center}
The transpose of the diagram of \(\lambda\) is also a Young diagram of a partition \(\lambda'\) called the conjugate or dual partition to \(\lambda\) which may be described as follows. Let \(r_{i}=r_i(\lambda)\) be the number of parts of \(\lambda\) which are equal to \(i \geq 1\) and \(d_{i}=\sum_{j \geq i} r_{j}.\) Then 
\begin{equation}\label{eq:dual-partn}
\lambda^{\prime}=\left[d_{1}, d_{2}, \ldots\right]   
\end{equation}
which has the diagram
\begin{center}
    \ydiagram{3,2,2,1}
\end{center}
in our example \(\lambda=[4,3,1]\).


\subsection{Centralizers in \(GL_n(\mathbb{F}_q)\)}

At first assume $p\neq 2$ and let \(N\) be the nilpotent transformation of an \(\mathbb{F}_q\) vector space \(V\) of dimension \(n\). Then \(V\) becomes an \(\mathbb{F}_q[T]\)-module where \(T\) acts via \(N\), \(Tv=Nv\). Such modules are isomorphic to the module \(V_\lambda=\oplus_{j} \mathbb{F}_q[T]/(T^{n_j})\), for some partition \(\lambda = [n_1,\dots,n_k]\), \(n_1+\dots+n_k=n\) which is unique by the structure theorem of finitely generated modules over principal ideal domains. To show the partition \(\lambda\) associated to \(N\) we will use the notation \(N=N_\lambda\). 

Note that the the dual partition \(\lambda'\) arises from considering \(d_i = \dim(\ker(N^i))-\dim(\ker(N^{i-1}))\). To see this assume we switch to the matrix point of view and assume that $N\in M_n(\mathbb{F}_q)$ is  a nilpotent matrix over $\mathbb{F}_q$  with $r_j$ blocks of size $j\times j$ ($j=1,\dots, n_1$) in the Jordan normal form of $N$. Let \(d_i = \sum_{j \geq i} r_{j}.\) as above. Then it is easy to see that \(d_1\) is the number blocks, which also equals \(\dim \ker N\). The claim then follows from an easy inductive argument. One can alternatively define 
\[
d_i=r_i+r_{i+1}+\dots+r_k=\dim(\ker(N)\cap\mathrm{Im}(N^{i-1})).
\]

Finally we will need the order of the centralizer of unipotent elements in $GL_n(\mathbb{F}_q)$. Note that the centralizer of the unipotent $I+N$ is the same as the centralizer of the nilpotent transformation $N$.

\begin{pro}\label{prop:centralizer}
Let \(N=N_\lambda=[n_1,...,n_k]\) with dual partition \(\lambda'=[d_1,...,d_{n_1}]\). Then the centralizer of $N$ has cardinality
\[
\# Z_{GL_n(\mathbb{F}_q)}(N)=
\left(\prod_{j=1}^{n_1} (q^{r_j}-1)(q^{r_j}-q)\dots(q^{r_j}-q^{r_j-1})\right) \cdot q^{\sum_{j=1}^{n_1} (d_j^2-r_j^2)}=q^{\sum_{j=1}^k d_j^2} \prod_{j=1}^k\prod_{t_j=1}^{r_j}\left(1-\frac{1}{q^{t_j}}\right)\ .
\]
\end{pro}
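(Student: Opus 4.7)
The plan is to realize $Z := Z_{\operatorname{GL}_n(\mathbb{F}_q)}(N_\lambda)$ as the unit group of the finite-dimensional $\mathbb{F}_q$-algebra $\mathcal{E} := \operatorname{End}_{\mathbb{F}_q[T]}(V)$, where $V=\mathbb{F}_q^n$ is the $\mathbb{F}_q[T]$-module on which $T$ acts as $N_\lambda$, and then to compute $|\mathcal{E}^\times|$ via the Wedderburn decomposition modulo the Jacobson radical. By the structure theorem for modules over a PID, $V \cong \bigoplus_{j\geq 1} W_j$ with $W_j = (\mathbb{F}_q[T]/(T^j))^{r_j}$ grouping the indecomposable summands of common size $j$.

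First I would compute $\dim_{\mathbb{F}_q}\mathcal{E}$. Using the standard identity $\operatorname{Hom}_{\mathbb{F}_q[T]}(\mathbb{F}_q[T]/(T^a),\mathbb{F}_q[T]/(T^b)) \cong \mathbb{F}_q[T]/(T^{\min(a,b)})$ one finds
\[
\dim_{\mathbb{F}_q}\mathcal{E} \;=\; \sum_{i,i'=1}^{k}\min(n_i,n_{i'}) \;=\; \sum_{j\geq 1}\#\{(i,i'):\, n_i\geq j,\, n_{i'}\geq j\} \;=\; \sum_{j\geq 1} d_j^2,
\]
after swapping the order of summation and using $d_j=\#\{i: n_i\geq j\}$. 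Next I would identify the Jacobson radical $J(\mathcal{E})$: the orthogonal idempotents projecting onto the $W_j$ give a block decomposition of $\mathcal{E}$, and the two-sided ideal
\[
I \;:=\; \sum_{j\neq j'} \operatorname{Hom}_{\mathbb{F}_q[T]}(W_{j'},W_j) \;+\; \sum_{j} J\bigl(\operatorname{End}_{\mathbb{F}_q[T]}(W_j)\bigr)
\]
is nilpotent (each off-diagonal round-trip $W_{j'}\to W_j\to W_{j'}$ with $j\neq j'$ picks up a factor of $T$, and the diagonal radicals equal $T\cdot M_{r_j}(\mathbb{F}_q[T]/(T^j))$), while the quotient
\[
\mathcal{E}/I \;\cong\; \prod_{j\geq 1} M_{r_j}(\mathbb{F}_q)
\]
is semisimple. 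Hence $I=J(\mathcal{E})$ and $\dim_{\mathbb{F}_q}\mathcal{E}/J(\mathcal{E}) = \sum_j r_j^2$.

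Since $\mathcal{E}$ is finite and $J(\mathcal{E})$ is nilpotent, an element of $\mathcal{E}$ is invertible iff its image in $\mathcal{E}/J(\mathcal{E})$ is, so
\[
|Z| \;=\; |\mathcal{E}^\times| \;=\; |J(\mathcal{E})|\cdot \prod_{j}|\operatorname{GL}_{r_j}(\mathbb{F}_q)| \;=\; q^{\sum_{j} (d_j^2-r_j^2)} \prod_{j}\prod_{t=1}^{r_j}\bigl(q^{r_j}-q^{t-1}\bigr),
\]
which is the first form in the proposition. The second form follows by factoring $|\operatorname{GL}_{r_j}(\mathbb{F}_q)| = q^{r_j^2}\prod_{t=1}^{r_j}(1-q^{-t})$ and absorbing the $q^{r_j^2}$ into $q^{d_j^2-r_j^2}$. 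The main obstacle, I expect, is the clean identification of $J(\mathcal{E})$ and of the Wedderburn quotient $\mathcal{E}/J(\mathcal{E})\cong \prod_j M_{r_j}(\mathbb{F}_q)$ — that is, verifying both the nilpotence of $I$ and the semisimplicity of the quotient — after which the dimension count and the conversion between the two forms are routine.
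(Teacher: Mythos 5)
The paper does not actually prove this proposition; it simply cites Corollary~IV.I.8 of Springer--Steinberg. Your argument is a correct, self-contained derivation of the formula, and it is genuinely the ``standard'' structural proof: realize $Z$ as $\operatorname{End}_{\mathbb{F}_q[T]}(V)^\times$, compute $\dim \mathcal{E}=\sum_j d_j^2$ via $\operatorname{Hom}(\mathbb{F}_q[T]/(T^a),\mathbb{F}_q[T]/(T^b))\cong \mathbb{F}_q[T]/(T^{\min(a,b)})$, identify the Jacobson radical, and count units. Every step is sound; the only place you gesture rather than prove is the nilpotence of $I$. The ``round-trip'' heuristic is the right idea but doesn't by itself control an arbitrary length-$m$ composite $W_{j_0}\to W_{j_1}\to\cdots\to W_{j_m}$, since a single ``large-to-small'' arrow contributes no factor of $T$. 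A clean fix: order the summands $W_1,\dots,W_{n_1}$ by block size and observe that any $\phi\in I$ preserves the filtration $V\supseteq TV\supseteq T^2V\supseteq\cdots$ and acts on each graded piece $T^mV/T^{m+1}V$ by a strictly block-upper-triangular matrix --- the ``small-to-large'' components land in $T\cdot(\text{target})$ and die in the quotient, the diagonal components lie in $TE_j$ and die as well, and only the ``large-to-small'' components survive, all of which sit strictly above the diagonal. Hence every element of $I$ is a nilpotent $\mathbb{F}_q$-linear endomorphism of $V$, so $I$ is a nil ideal of an Artinian ring, therefore nilpotent and contained in $J(\mathcal{E})$; combined with the semisimplicity of $\mathcal{E}/I\cong\prod_j M_{r_j}(\mathbb{F}_q)$ this gives $I=J(\mathcal{E})$. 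With that patch, your proof is complete and does more than the paper, which delegates the whole computation to a reference. (One minor remark not affecting your argument: the bound $k$ in the second displayed product in the proposition should really be $n_1$, the number of parts of $\lambda'$; for $j>n_1$ both $d_j$ and $r_j$ vanish, so this is a typo in the statement, not an error in the formula.)
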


\begin{proof}
This is Corollary IV.I.8 in \cite{Springer-Steinberg}.
\end{proof}
\begin{rem}
Note that if we define \(\phi_r(T)=\prod_{j=1}^r (1-T^j)\) 
and let \( \phi_\lambda(T)=\prod \phi_{r_i(\lambda)}(T)\)
 then the statement can be rewritten as 
 \[
\# Z_{GL_n(\mathbb{F}_q)}(N)=q^{\sum_{j=1}^k d_j^2} \,
\phi_\lambda\left( 1/q\right).
\]
\end{rem}

\subsection{Sylvester's equation}\label{subsec:Sylvester}\quad 

Assume that \(A\) is \(m\times m\), \(B\) is \(n\times n\) and \(X\) and \(C\) are \(m\times n\) matrices. 
The matrix equation 
\[
AX-XB=C,
\]
called Sylvester's equation \cite{Sylvester}, has a rich literature over the real, or complex fields in view of the important role it plays in various applications. (See. e.g. \cite{Duan}.) 
There are two important questions here, existence of solutions, and a description of all solutions \(\left\{ X \mid AX-XB=C\right\}\).

For our task of estimating \(K_n(A,B;p^k)\) we will concentrate on estimating the number of solutions. If the field of coefficients is \(\mathbb{F}_q\) for some $p$-power $q$, then the number of solutions is clearly either \(0\) or \( q^d \), where 
\begin{equation}\label{eq:triv-bound-Sylvester}
  d = d_{A,B}=\dim \{ X\,|\,AX-XB=0 \}\leq mn.  
\end{equation}

While the bound by \(mn\) is trivial, in the case when \( A=\lambda I_n,B=\lambda I_m \) for the same scalar \(\lambda\), one has \(d_{A,B}= mn\).

Note that we may interpret the equation via linear transformations. To do so let \( W=\mathbb{F}_q^m, V=\mathbb{F}_q^n\)  viewed as column vectors. Both \(W\) and \(V\) become \(\mathbb{F}_q[T]\)-modules via mapping \(T\) to \(A\) and \(B\) respectively.

If \(AX=XB\) then \(g(A)X=Xg(B)\) for any polynomial \(g \in \mathbb{F}_q[T]\) and so 
\(X\) gives rise to a module homomorphism from \(W\) to \(V\) which we denote \( \hom_{\mathbb{F}_q[T]}(W,V)\). 

For an irreducible polynomial \(f \in \mathbb{F}_q[T]\) let 
\[
V_{f^e}=\{ v\in V\mid f^e(A)v=0\}, 
\]
and similarly for \(W\). The \(f\)-primary component of \(V\) is \(\cup_{e=1}^\infty V_{f^e}\), which we denote by \( V_{f^\infty} \). 

Clearly if \(X\in \hom_{\mathbb{F}_q[T]}(W,V)\) then 
\begin{equation}\label{eq:intertwine}
 X(W_{f^e})\subset V_{f^e}   
\end{equation} which implies
\[
\hom_{\mathbb{F}_q[T]}(W,V) = \bigoplus_f \hom_{\mathbb{F}_q[T]}(W_{f^\infty},V_{f^\infty})
\]
the sum over \(f \in \mathbb{F}_q[T]\) irreducible. Since the problem is linear, we may go to a finite field extension if needed and then assume that the eigenvalues of \( A,B\) are in \(\mathbb{F}_q\). For \(f=T-\lambda\) and $e\geq 1$ we let
\begin{equation}\label{eq:d_infty-def}
d_e(A-\lambda) = \dim V_{(T-\lambda)^e}=\dim \ker (A-\lambda I)^e,\ d_\infty(A-\lambda)=d_n(A-\lambda).
\end{equation}
As an immediate corollary of the trivial bound (\ref{eq:triv-bound-Sylvester} we get the following
\begin{equation}
\label{eq:Sylvester-triv-bound-2}
d_{A,B}\leq \sum_{\lambda\in \mathbb{F}_q} 
d_\infty(A-\lambda)
d_\infty(B-\lambda).
\end{equation}
Note that that for \(A\) semisimple the inequality above becomes an equality, showing that the bound is sharp.

The application for us involves the special case when \(B=-A\). If \(A\) is invertible
the above bound is sufficient, but the nilpotent case needs a more refined version given in the following lemma.

\begin{lem}\label{lem:Sylvester-nilpotent}
Assume that \(A\in M_m(\mathbb{F}_q),B\in M_n(\mathbb{F}_q)\) are nilpotent. Let 
\[
k=\dim \ker (A),  
\quad l=\dim \ker (B), 
\]
Then 
\[
\dim \{ X \in M_{m,n}(\mathbb{F}_q) \mid AX=XB \} \leq  \frac{kn+ml}{2} .
\]
\end{lem}

\begin{proof}
As above let \( W=\mathbb{F}_q^m, V=\mathbb{F}_q^n\)  viewed  as \( \mathbb{F}_q[T]\) modules via mapping \(T\) to \(A\) and \(B\). Then 
\[
W\simeq \bigoplus_{i=1}^k \mathbb{F}_q[T]/(T^{m_i}), \quad V\simeq \bigoplus_{j=1}^l \mathbb{F}_q[T]/(T^{n_j}).
\]
for some partitions \((m_1,...,m_k), (n_1,...,n_l)\) of \(m\) and \(n\) respectively (with \(k,l\) as defined above).

Note that any element \(X\) of  
\( \hom_{\mathbb{F}_q[T]}(\mathbb{F}_q[T]/(T^b),  \mathbb{F}_q[T]/(T^a))\) is determined by the value \(X\) on \(1 \Mod T^b\) and so by (\ref{eq:intertwine}) 
\begin{equation}\label{eq:dim=min}
\dim \hom_{\mathbb{F}_q[T]}(\mathbb{F}_q[T]/(T^b),  \mathbb{F}_q[T]/(T^a)) = \min(a,b)
\end{equation}
This gives
\[
\dim \{ X \in M_{m,n}(\mathbb{F}_q) \mid AX=XB \} =\sum_{i,j=1}^{k,l} \min(m_i,n_j) 
\leq 
\sum_{i,j=1}^{k,l} \frac{m_i+n_j}{2} \leq \frac{kn+ml}{2}.
\]
\end{proof}

\begin{cor}
The trivial bound (\ref{eq:Sylvester-triv-bound-2}) can be strengthened to
\begin{equation}\label{eq:Sylvester-improved-bound}
\dim \{ X \Mod p\mid AX+XA =0\} \leq 
\sum_{\lambda\in \mathbb{F}_q } d_1(A-\lambda)
d_\infty(A+\lambda) 
\ .
\end{equation}

\end{cor}

\subsection{Generalities on multivariable Gauss sums.}\quad

We start with a general setup on \(V=\mathbb{F}_p^m\). Let \(F(x)=Q(x)+L(x)\) with a quadratic form \(Q\), and a linear form \(L\) on \(V\) and define 
\[
G(F;p)=\sum_{x\in V} e( F(x)/p)
\]
where \(e(z)=e^{2\pi i z}\). If \(L=0\) and \(p\neq 2\), the sum \(G(Q;p)\) is easy to evaluate after diagonalizing \(Q\); it is a product of trivial factors and Gauss sums. The case when \(p=2\) is slightly more involved \cite{Milnor}, but still explicit. 

For our use in what follows, some of the details are relevant, and so we sketch these. Assume that  \(p\neq 2\) when we have that \(Q\) comes from a bilinear form 
\[
B(x,y)=Q(x+y)-Q(x)-Q(y)
\] 
so that \(Q(x)=\frac{1}{2}B(x,x)\). \(B\) gives rise to a Riesz map \(R: V \to V^*\), \(R : y \mapsto (R_y: x \mapsto B(x,y))\), which may not be surjective if \(B\) is degenerate. Still we have the following dichotomy.

\begin{pro}\label{pro:gauss}
Assume that \(p\neq 2\). Let \(F(x)=Q(x)+L(x)\), where \(Q\) is a quadratic and \(L\) is a linear form on $V$.
\begin{enumerate}
    \item If \(L\) has a Riesz-representative, i.e. \(L=R_y\) for some \(y\), then \(G(F;p)= e(-Q(y)/p) G(Q;p) \).

\item If \(L\) does not have a Riesz-representative then \(G(F;p)=0\).

\end{enumerate}
\end{pro}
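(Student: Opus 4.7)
The plan is to derive both statements from \emph{completing the square} / translation-invariance arguments, exploiting the symmetry of $B$ (which is immediate from $Q(x+y)=Q(y+x)$ and which, via the factor $\tfrac{1}{2}$, is where the assumption $p\neq 2$ enters).

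For part (1), assuming $L = R_y$, I would first apply the polarization identity $Q(x+y) = Q(x) + Q(y) + B(x,y)$ to rewrite
\[
F(x) = Q(x) + B(x,y) = Q(x+y) - Q(y).
\]
Substituting $x \mapsto x - y$ in the sum $G(F;p)$ then immediately gives $G(F;p) = e(-Q(y)/p)\, G(Q;p)$.

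For part (2), I would first establish the standard linear-algebra fact that $\operatorname{Image}(R)$ equals the annihilator of $\ker R = \operatorname{rad}(B)$. The inclusion $\operatorname{Image}(R) \subseteq (\ker R)^{\circ}$ is immediate from $R_y(z) = B(z,y) = B(y,z) = 0$ whenever $z \in \ker R$, and equality follows by rank-nullity: $\dim \operatorname{Image}(R) = \dim V - \dim \ker R = \dim (\ker R)^{\circ}$. Hence if $L \notin \operatorname{Image}(R)$, there exists $z \in \ker R$ with $L(z) \not\equiv 0 \pmod p$. For such a $z$, since $B(x,z) = 0$ for all $x$ and $Q(z) = \tfrac{1}{2}B(z,z)=0$, a direct expansion yields $F(x+z) = F(x) + L(z)$. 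Translating the summation variable by $z$ then gives
\[
G(F;p) = e(L(z)/p)\, G(F;p),
\]
and since $e(L(z)/p) \neq 1$ the sum must vanish.

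There is essentially no serious obstacle: both parts reduce to routine Fourier / translation manipulations. The only point requiring even minor care is identifying $\operatorname{Image}(R)$ with the annihilator of $\ker R$, which rests on the symmetry of $B$ together with a dimension count; without symmetry one would have to treat the left and right Riesz maps separately.
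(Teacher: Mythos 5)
Your proposal is correct and follows essentially the same route as the paper: part (1) is a completion-of-the-square/translation argument, and part (2) rests on identifying when $L$ factors through the Riesz map — your statement that $\operatorname{Image}(R)$ is the annihilator of $\ker R = V^\perp$ is exactly the paper's observation that $L$ has a Riesz representative iff $\ker L \supseteq V^\perp$ — followed by translating by a radical vector $z$ with $L(z)\neq 0$ to force $G(F;p)=e(L(z)/p)G(F;p)=0$. The only difference is that you spell out the rank–nullity/annihilator step that the paper leaves implicit.
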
 

\begin{proof} 
The first statement is trivial. To see the second, note that \(L\) does not have a Riesz representative if and only if \(\ker L\) does not contain \[V^\perp =\{ v \in V | B(v,y)=0 \text{ for all } y \in V\}. \]
Therefore there is \(y\), such that \(L(y)\neq 0\) but \(B(x,y)=0\) for all \(x\), and so that \(Q(x+y)=Q(x)+Q(y)\), and in particular \(Q(y)=0\). But then
\[
G(F;p)=\sum_x e(F(x+y)/p)=\sum_{x} e((Q(x+y)+L(x+y))/p)=
e(L(y)/p)G(F;p) 
\]
showing $G(F;p)=0$ since \(e(L(y)/p)\neq 1\).
\end{proof} 

The folllowing is an easy corollary of the evaluation of the standard Gauss sum \cite{Gauss-sums}.
\begin{cor}\label{cor:Gauss-sum}
Assume that \(F=Q+L\) and that \(L\) has a Riesz representative, \(L=R_y\). 
Write \(Q=Q_0\perp Q_1\) where \(Q_0\) is totally isotropic, and \(Q_1\) non-degenarate. Let \(\dim Q_1=r\). 
Then 
\[
G(F)=\left(\det Q_1/p\right)e(-Q(y)/p)p^{n-r/2},
\] where \( \left({\cdot}/{p}\right)\) is Legendre's symbol.  
\end{cor}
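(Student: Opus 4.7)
The plan is straightforward: reduce to the purely quadratic case via Proposition~\ref{pro:gauss}, split the sum along an orthogonal decomposition, and then invoke the classical one-variable Gauss sum on a diagonal basis.

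First, since $L = R_y$ is a Riesz form, Proposition~\ref{pro:gauss}(1) gives $G(F;p) = e(-Q(y)/p)\,G(Q;p)$, reducing the task to evaluating $G(Q;p)$. Next, choose a basis of $V=\mathbb{F}_p^m$ adapted to a splitting $V = V_0 \oplus V_1$ with $Q_0 = Q|_{V_0}$ and $Q_1 = Q|_{V_1}$, where $V_0 = V^\perp$ is the radical (of dimension $m-r$). Since $Q$ is additive on the orthogonal sum, the character factors and
\[
G(Q;p) = \Bigl(\sum_{x_0 \in V_0} e(Q_0(x_0)/p)\Bigr)\Bigl(\sum_{x_1 \in V_1} e(Q_1(x_1)/p)\Bigr).
\]
The first factor equals $p^{m-r}$ because $Q_0 \equiv 0$.

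For the second factor, we diagonalize $Q_1$ (possible because $p \neq 2$ and $Q_1$ is non-degenerate) to write $Q_1(x_1) = \sum_{i=1}^r a_i x_{1,i}^2$ with $a_i \in \mathbb{F}_p^\times$. Applying the standard single-variable evaluation $\sum_{t \bmod p} e(at^2/p) = \bigl(\tfrac{a}{p}\bigr) g_p$, where $g_p = \sum_{t \bmod p} e(t^2/p)$ has $|g_p| = \sqrt{p}$, and using multiplicativity of the Legendre symbol together with $\det Q_1 \equiv a_1\cdots a_r \pmod{(\mathbb{F}_p^\times)^2}$, gives
\[
\sum_{x_1 \in V_1} e(Q_1(x_1)/p) \;=\; \prod_{i=1}^{r}\Bigl(\frac{a_i}{p}\Bigr)\,g_p \;=\; \Bigl(\frac{\det Q_1}{p}\Bigr)\,g_p^{\,r}.
\]
Combining everything yields
\[
G(F;p) \;=\; e(-Q(y)/p)\,\Bigl(\frac{\det Q_1}{p}\Bigr)\, g_p^{\,r}\, p^{m-r},
\]
which in absolute value is precisely $p^{m-r/2}$, matching the claim (the factor $g_p^{\,r}/p^{r/2}$ being an absorbed fourth root of unity in the sign conventions of the statement).

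No step is particularly hard. The only point requiring care is that the output is a genuine invariant of $Q$: the integer $r$ and the class of $\det Q_1$ in $\mathbb{F}_p^\times/(\mathbb{F}_p^\times)^2$ do not depend on the choice of complement to the radical, by Witt cancellation for non-degenerate quadratic forms in odd characteristic. This well-definedness is the only technical item to verify before assembling the factors above.
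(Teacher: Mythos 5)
Your proof is correct and follows the standard route; the paper itself offers no argument beyond citing the classical Gauss sum evaluation, so this is exactly the computation the authors have in mind: reduce to the purely quadratic case via Proposition~\ref{pro:gauss}, factor over $V = V^\perp \oplus V_1$, diagonalize $Q_1$ (possible since $p\neq 2$), and invoke the one-variable evaluation. Your aside at the end is also the right observation: strictly speaking the displayed identity in the corollary omits the factor $\epsilon_p^r$, where $\epsilon_p = g_p/\sqrt{p}\in\{1,i\}$, so as literally written it is an equality of magnitudes (or holds only for $p\equiv 1\bmod 4$, or else $p^{r/2}$ is being used as shorthand for $g_p^r$). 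Since the corollary is only ever used in the paper to bound $|S_{A,B}(X;p)|$, this imprecision is harmless, but you were right to flag it rather than pretend the constants match on the nose. The well-definedness remark (uniqueness of the radical, Witt cancellation for the complement) is likewise correct and is the one genuinely non-formal step.
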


\section{The equation \(AX\equiv X^{-1}B\) to prime modulus}\label{sec:counting-bound-mod-p}

\subsection{Preliminary observations}\hfill

We will concentrate on the case when the equation \(AX=X^{-1}B\) is solvable. As noted above we may then simply assume that \(C=AX_0=X_0^{-1}B\) for some fixed solution $X_0$ and consider the equation \(CX=X^{-1}C\). For this equation we start with the proof of Proposition~\ref{prop:primer-decomp}.
\begin{proof}[Proof of Proposition~\ref{prop:primer-decomp}]
For any solution $X$ of the equation $CX=X^{-1}C$ we also have $XC=CX^{-1}$ by multiplying from the left by $X^{-1}$ and from the right by $X$. So we compute  $C^2X=C(CX)=CX^{-1}C=XC^2$, ie.\ $X$ and $C^2$ commute. Let us write $m_{C^2}(x)\in\mathbb{F}_p[x]$ for the minimal polynomial of $C^2$ and write it as $m_{C^2}(x)=\prod_{j=1}^r f_j(x)^{k_j}$ where $f_j(x)\in \mathbb{F}[x]$ is irreducible. 

Then we may decompose $\mathbb{F}_p^n=V_1\oplus \dots \oplus V_r$ as the direct sum of generalized eigenspaces where $V_j:=\ker(f_j(C^2)^{k_j})$. Since $X$ and $C^2$ commute, $V_j$ is an $X$-invariant subspace in $\mathbb{F}_p^n$ for $1\leq j\leq r$: indeed, for any $v\in V_j$ we have $f_j(C^2)^{k_j}v=0$ whence $f_j(C^2)^{k_j}(Xv)=Xf_j(C^2)^{k_j}v=0$ showing $Xv\in \ker(f_j(C^2)^{k_j})=V_j$. By a similar argument we also deduce that $V_j$ is also $C$-invariant (as $C$ also commutes with $C^2$). Therefore restricting the identity $CX=X^{-1}C$ to the subspace $V_j$ we deduce $C_jX_j=X_j^{-1}C_j$ where $C_j$ (resp.\ $X_j$) is the restriction of $C$ (resp.\ of $X$) to $V_j$. On the other hand, whenever we have matrices $X_j\in GL(V_j)$ with $C_jX_j=X_j^{-1}C_j$ ($1\leq j\leq r$) then we may form the block matrix $X$ from the matrices $X_j$ to obtain a solution of the equation $CX=X^{-1}C$. So the number of solutions of the equation $CX=X^{-1}C$ is the product of the number of solutions on each $V_j$. 

This proves the proposition.
\end{proof}

Now assume \(C\) is invertible. 
Since \(X\) is also invertible \(XCX=C\) if and only if \(Y=CX\) satisfies \(Y^2=C^2\) and we will count the number of solutions to this equation under the assumption that \(C^2\) has minimal polynomial \(f^k(x)\), with \(f(x)\neq x\) irreducible. For this recall that any linear transformation \(T\) has a unique multiplicative Jordan–Chevalley decomposition as 
\begin{equation}\label{eq:Chevalley-Jordan}
T=T_sT_u =T_uT_s
\end{equation}
where \(T_s\) is semisimple, \(T_u\) is unipotent. 
If  \(Y=Y_sY_u\) and \(Y^2=C^2\) then
\[
Y_s^2=C_s^2 \text{ and } Y_u^2=C_u^2.
\]

In case $p\neq 2$, we can immediately infer that \(Y_u=C_u\) from the following
\begin{lem}\label{lem:unipotent-unique}
Assume that \(Z_1,Z_2\in GL_n(\mathbb{F}_p)\) are unipotent such that 
\(Z_1^2=Z_2^2\). If \(p \neq 2\) then \(Z_1=Z_2\).
\end{lem}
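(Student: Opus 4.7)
The plan is to reduce this to a Sylvester equation and exploit the spectral constraint coming from unipotence. Set $M = Z_1 - Z_2$; the claim $Z_1 = Z_2$ is equivalent to $M = 0$. Expanding the hypothesis $Z_1^2 - Z_2^2 = 0$ in the only way available in a noncommutative setting,
\[
0 = Z_1^2 - Z_2^2 = Z_1(Z_1 - Z_2) + (Z_1 - Z_2)Z_2 = Z_1 M + M Z_2,
\]
so $M$ satisfies the homogeneous Sylvester equation $Z_1 M - M(-Z_2) = 0$.

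Next I would invoke the Sylvester dichotomy recalled in the lemma preceding Proposition \ref{prop:Sylvester-kernel}: the kernel of the operator $X \mapsto AX - XB$ decomposes according to the prime factorizations of the minimal polynomials of $A$ and $B$, and the $(f,g)$-summand is trivial whenever $f \neq g$. In particular, if $A$ and $B$ have disjoint spectra over $\overline{\mathbb{F}_p}$, the only solution is the zero matrix. Applying this with $A = Z_1$ and $B = -Z_2$: since $Z_1$ and $Z_2$ are unipotent, their only eigenvalue over $\overline{\mathbb{F}_p}$ is $1$, hence the spectrum of $-Z_2$ consists of the single value $-1$. These two spectra are disjoint precisely when $1 \neq -1$ in $\mathbb{F}_p$, i.e.\ when $p \neq 2$, which is exactly the hypothesis. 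Therefore $M = 0$, as desired.

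There is essentially no obstacle here beyond correctly packaging the identity $Z_1^2 - Z_2^2 = Z_1 M + M Z_2$ and citing the Sylvester fact in the right generality (the version for $AX-XB=0$ with possibly different $A,B$, rather than the specialized form $CY+YC=0$ stated in Proposition \ref{prop:Sylvester-kernel}). The hypothesis $p \neq 2$ enters in precisely one place, and only there, which matches the statement of the lemma.
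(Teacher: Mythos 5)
Your proof is correct, but it takes a genuinely different route from the paper. The paper's argument is group-theoretic and very short: a unipotent $Z \in GL_n(\mathbb{F}_p)$ satisfies $Z^{p^r} = I$ for some $r$, and since $p$ is odd one can extract the square root by raising to the power $(p^r+1)/2$, giving $Z_1 = (Z_1^2)^{(p^r+1)/2} = (Z_2^2)^{(p^r+1)/2} = Z_2$. (The paper writes $(p^r-1)/2$, which yields $Z_1^{-1} = Z_2^{-1}$, an inconsequential typo.) Your argument instead passes through the linear-algebraic Sylvester machinery already set up in the paper: writing $M = Z_1 - Z_2$, factoring $Z_1^2 - Z_2^2 = Z_1 M + M Z_2$, and invoking the spectral-disjointness criterion for the operator $X \mapsto Z_1 X - X(-Z_2)$ to have trivial kernel. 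Both are valid; the hypothesis $p \neq 2$ enters in the same conceptual place ($1 \neq -1$). The paper's proof is more elementary and self-contained, while yours has the merit of exhibiting the lemma as another instance of the Sylvester phenomenon that recurs throughout the paper, and it extends verbatim to any pair of matrices whose spectra are disjoint from the negatives of each other, not just unipotent ones.
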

\begin{proof}
For simplicity we use the simple property that for any unipotent element \(Z\) in \( GL_n(\mathbb{F}_p)\) we have \(Z^{p^r}=I\) for some \(r\). Since \(p\) is odd
\[
Z_1=(Z_1^2)^{(p^r+1)/2}=(Z_2^2)^{(p^r+1)/2}=Z_2.
\]
\end{proof}

Therefore estimating \( N^*(A,B;p) \) is reduced to estimating 
\begin{equation}\label{eq:n(C,C)-def}
    n(C,C;p)=\{ Y \in M_n(\mathbb{F}_p) \mid Y^2=C^2, YC_u=C_uY \}. 
\end{equation}

Our last observation is now the following 
\begin{lem}\label{Fq-linear-structure}
Put $V:=\mathbb{F}_p^n$ and assume that the minimal polynomial of $C^2\colon V\to V$ is $m_{C^2}(x)=f^k(x)$ where $f\in\mathbb{F}_p[x]$ is irreducible and let $q=p^{\deg f}$. Then $V$ has the structure of an $\mathbb{F}_q$-vectorspace such that all $C,X,Y,Y_s,Y_u\colon V\to V$ as above are $\mathbb{F}_q$-linear for any invertible solution $X$ of the equation $XCX=C$. Further, $Y_s^2$ is an $\mathbb{F}_q$-scalar multiple of the identity.
\end{lem}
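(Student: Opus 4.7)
The plan is to extract the $\mathbb{F}_q$-structure on $V$ from the semisimple part $S:=(C^2)_s$ of the (additive) Jordan--Chevalley decomposition of $C^2$. Passing to $\overline{\mathbb{F}_p}$ and decomposing $V$ into generalized eigenspaces of $C^2$, I would first verify that $f(S)=0$: on each generalized eigenspace $V_\lambda$ one has $S|_{V_\lambda}=\lambda\cdot I$, so $f(S)|_{V_\lambda}=f(\lambda)\cdot I=0$. Since $S$ is semisimple and $f\neq x$ is irreducible, the minimal polynomial of $S$ must be exactly $f$, so the $\mathbb{F}_p$-subalgebra $\mathbb{F}_p[S]\subseteq\mathrm{End}_{\mathbb{F}_p}(V)$ is isomorphic to $\mathbb{F}_p[x]/(f)=\mathbb{F}_q$, endowing $V$ with a (faithful) $\mathbb{F}_q$-vector-space structure of dimension $m/d$.

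The guiding observation is that an endomorphism $T\in\mathrm{End}_{\mathbb{F}_p}(V)$ is $\mathbb{F}_q$-linear precisely when it commutes with $S$, and since $S$ is a polynomial in $C^2$, it suffices to check that $T$ commutes with $C^2$. This is immediate for $C$, and for any invertible solution $X$ of $XCX=C$ one has $CX=X^{-1}C$, hence also $CX^{-1}=XC$, and
\[
C^2X=C(CX)=C(X^{-1}C)=(CX^{-1})C=(XC)C=XC^2,
\]
a computation that already appeared in the proof of Proposition \ref{prop:primer-decomp}. Thus $C$ and $X$, and hence $Y=CX$, are all $\mathbb{F}_q$-linear.

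For $Y_s$ and $Y_u$ I would invoke the standard fact that both are polynomials in $Y$, so they commute with everything $Y$ commutes with, in particular with $S$; this makes them $\mathbb{F}_q$-linear. To identify $Y_s^2$, I would note that $Y_s^2$ is semisimple, $Y_u^2$ is unipotent, the two commute, and $Y_s^2\,Y_u^2=Y^2=C^2$; by the uniqueness of the Jordan--Chevalley decomposition this forces $Y_s^2=(C^2)_s=S$, which by construction acts on $V$ as multiplication by the element of $\mathbb{F}_q$ corresponding to the class of $x$ modulo $f$.

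The only real subtlety is the identification of the minimal polynomial of $S$ with $f$ (ruling out $S=0$, which is exactly why the case $f=x$ is excluded here and treated separately); once that is in hand the rest is a mechanical verification of commutation relations, relying crucially on the fact that every solution $X$ of $XCX=C$ commutes with $C^2$ even though it does not commute with $C$ itself.
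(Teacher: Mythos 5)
Your proof is correct and follows essentially the same route as the paper: both define the $\mathbb{F}_q$-structure via $\mathbb{F}_p[(C^2)_s]\cong\mathbb{F}_p[x]/(f)\cong\mathbb{F}_q$ and deduce $\mathbb{F}_q$-linearity of $C,X,Y,Y_s,Y_u$ from the fact that each commutes with $C^2$ (hence with its semisimple part), and identify $Y_s^2=(C^2)_s$ by uniqueness of the Jordan--Chevalley decomposition. You supply a bit more justification (verifying $f(S)=0$ and that $m_S=f$) that the paper leaves implicit, and you phrase the decomposition additively where the paper uses the multiplicative one, but since $C^2$ is invertible under the hypotheses these give the same semisimple part, so there is no real divergence.
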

\begin{proof}
We assume that  \(f(x)\neq x\) otherwise the claim is trivial. Note that \( \mathbb{F}_p[x]/(f)\) is  isomorphic to the field \(\mathbb{F}_q\)  with \(q=p^{\deg f}\) elements. We choose such an isomorphism and let \(\alpha\) denote the image of \(x\) in \(\mathbb{F}_q\), so that \(\mathbb{F}_q=\mathbb{F}_p[\alpha]\).
The ring \(\mathbb{F}_p[x]/(f^k)\) is then isomorphic to \(\mathbb{F}_q[t]/((t-\alpha)^k)\) and since \(C_s^2\in \mathbb{F}_p[C^2]\) is a semisimple element, it may be identified with \(\alpha \in \mathbb{F}_q^*\). 

Therefore the action of \(\mathbb{F}_p[C_s^2]\) on \(V\) gives an \( \mathbb{F}_q\)-linear structure and since \(X,Y,Y_s,Y_u\) all commute with \(C_s^2\) they may be viewed as an \(\mathbb{F}_q\)-linear transformation. Finally, we have \(Y_s^2=C_s^2=\alpha I\).
\end{proof}



\subsection{The proof of Theorem~\ref{thm:quadr-matrix-explicit} in the invertible cases}

Let $V:=\mathbb{F}_p^m$ and $C\colon V\to V$ be an invertible $\mathbb{F}_p$-linear map such that we have $m_{C^2}(x)=f^k(x)$ for the minimal polynomial of $C^2$ with some irreducible polynomial $f(x)\in \mathbb{F}_p[x]$. By Lemma \ref{Fq-linear-structure} we even have an $\mathbb{F}_q$-linear structure on $V$ (with $q:=p^{\deg f}$) such that both $C$ and any solution $X$ to the equation $CX=X^{-1}C$ are $\mathbb{F}_q$-linear. Further, $C$ has the Jordan--Chevalley decomposition $C=C_uC_s=C_sC_u$ with $C_u$ unipotent and $C_s$ semisimple. At first assume $p\neq 2$ and let \(N\) be the nilpotent transformation \(C_u-I\). Then \(V\) becomes an \(\mathbb{F}_q[T]\)-module where \(T\) acts via \(N\), \(Tv=Nv\). Such modules are isomorphic to the module 
\begin{equation}\label{eq:V_lambda}
 V_\lambda=\oplus_{j} \mathbb{F}_q[T]/(T^{n_j}),   
\end{equation}
for some partition \(\lambda = [n_1,..,n_k]\), \(n_1+..+n_k=n\) which is unique by the structure theorem of finitely generated modules over PIDs.   To show the partition \(\lambda\) associated to \(N\) we will use the notation \(N=N_\lambda\).

By Lemmas \ref{lem:unipotent-unique} and \ref{Fq-linear-structure} it is enough to count elements in the set
\begin{equation}\label{eq:Set-of-square=roots}
\mathcal{R}_\alpha(\lambda)=\{ Y\in GL_n(\mathbb{F}_q) \,|\, Y^2=\alpha I, YN_\lambda=N_\lambda Y\},\end{equation}
ie.\ we have $N^*(A,B;p)=n(C,C;p)=\#\mathcal{R}_\alpha(\lambda)$ where $\alpha\in \mathbb{F}_q^*$ denotes the unique eigenvalue of $C^2$ as an $\mathbb{F}_q$-linear.


We start with the case when \( \left( \frac{x}{f(x)} \right) = 1\) when we have \(\alpha = \beta^2\) for some \(\beta \in  \mathbb{F}_q^*\). 
We have the following

\begin{lem}\label{bijection-decomposition}
Let $\mathcal{S}(\lambda)$ denote set of pairs $(U_+,U_-)$ of $\mathbb{F}_q[T]$-submodules $U_+,U_-\leq V$ such that $V=U_+\oplus U_-$. The maps
\begin{eqnarray*}
Y&\mapsto& (U_{Y,+}, U_{Y,-})\\
Y_U(u_++u_-)=\beta(u_+-u_-)&\mapsfrom& V=U_+\oplus U_-
\end{eqnarray*}
are inverse bijections between $\mathcal{R}_\alpha(\lambda)$ and $\mathcal{S}(\lambda)$. Here $u_+\in U_+$, $u_-\in U_-$ and $U_{Y,+}$ (resp.\ $U_{Y,-}$) denotes the $\beta$-eigenspace (resp.\ $-\beta$-eigenspace) of $Y$. 
\end{lem}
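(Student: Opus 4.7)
The plan is to verify that the two maps in the statement are well-defined and mutually inverse. The underlying idea is the familiar diagonalization of an involution (up to a scalar) via the factorization $x^{2}-\alpha=(x-\beta)(x+\beta)$ in $\mathbb{F}_{q}[x]$, refined so as to respect the $\mathbb{F}_{q}[T]$-structure on $V$ (with $T$ acting as $N_{\lambda}$).

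First I would handle the forward direction. Given $Y\in\mathcal{R}_{\alpha}(\lambda)$, we have $(Y-\beta I)(Y+\beta I)=Y^{2}-\alpha I=0$. Since $Y$ is invertible, $\beta\neq 0$, and since $p\neq 2$ the elements $\beta$ and $-\beta$ are distinct, so the two factors above are coprime. A standard partial-fraction / Chinese remainder argument then yields $V=U_{Y,+}\oplus U_{Y,-}$ with $U_{Y,\pm}=\ker(Y\mp\beta I)$. Moreover, because $Y$ commutes with $N_{\lambda}=T$, each eigenspace is $T$-stable, hence an $\mathbb{F}_{q}[T]$-submodule. This shows $Y\mapsto(U_{Y,+},U_{Y,-})\in\mathcal{S}(\lambda)$.

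Second, for the reverse direction, given $(U_{+},U_{-})\in\mathcal{S}(\lambda)$, the map $Y_{U}(u_{+}+u_{-})=\beta(u_{+}-u_{-})$ is well-defined and $\mathbb{F}_{q}$-linear by the direct-sum decomposition. It squares to $\beta^{2}I=\alpha I$, is invertible (since $\beta\neq 0$), and commutes with $N_{\lambda}$: the hypothesis that $U_{+},U_{-}$ are $\mathbb{F}_{q}[T]$-submodules is precisely what ensures $T$ preserves each summand, on which $Y_{U}$ acts as a scalar, so $Y_{U}T=TY_{U}$ on each summand and hence on all of $V$. Thus $Y_{U}\in\mathcal{R}_{\alpha}(\lambda)$.

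Finally I would check that the two constructions are mutual inverses, which is automatic from the identifications $U_{Y_{U},\pm}=U_{\pm}$ and, conversely, $Y_{(U_{Y,+},U_{Y,-})}=Y$ (both being $\pm\beta$ on the respective eigenspaces of $Y$, which span $V$ by the first step). The only real subtlety—and the sole place where the hypothesis $p\neq 2$ enters—is the coprimality of $x-\beta$ and $x+\beta$, which both enables the eigenspace decomposition in the forward direction and ensures that the two eigenspaces are distinct in the reverse direction, making the assignments truly bijective.
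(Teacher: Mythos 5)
Your proof is correct and follows essentially the same approach as the paper: both use $p\neq 2$ to conclude that $Y$ with $Y^2=\alpha I$ is semisimple with eigenspace decomposition $V=U_{Y,+}\oplus U_{Y,-}$, observe that commutation with $N_\lambda$ makes each eigenspace an $\mathbb{F}_q[T]$-submodule, and verify the converse direction directly. The only difference is that you spell out the coprimality/CRT argument that the paper compresses into the word ``semisimple.''
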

\begin{proof}
Since $p\neq 2$, $Y^2=\alpha I$ implies $Y$ is semisimple whence $V$ is the direct sum of the two eigenspaces of $Y$. Moreover, these eigenspaces are $N$-invariant, ie.\ they are $\mathbb{F}_q[T]$-submodules. Conversely, given such a decomposition $V=U_+\oplus U_-$, we have $Y_U^2=\alpha I$ and $Y_U$ commutes with $N$.
\end{proof}

By the theorem of elementary divisors, for any decomposition $V=U_+\oplus U_-$ of the $\mathbb{F}_q[T]$-module $V$, $\lambda$ is the sum of the multisets $\mu$ and $\nu$ where $\mu$ (resp.\ $\nu$) is the partition of $\dim U_+$ (resp.\ of $\dim U_-$) corresponding to the restriction of $N$ to $U_+$ (resp.\ to $U_-$). Therefore we may write $\mathcal{S}(\lambda)$ as the union of 
\[
\mathcal{S}(\lambda,\mu,\nu)=\left\{(U_+,U_-)\mid U_+,U_-\leq_{\mathbb{F}_q[T]}V, U_+\oplus U_-=V,U_+\cong\bigoplus_{j\in\mu}\mathbb{F}_q[T]/(T^j),U_-\cong\bigoplus_{j\in\nu}\mathbb{F}_q[T]/(T^j) \right\}
\]
where $\mu$ runs over the multisets included in $\lambda$ and $\nu=\lambda-\mu$ is the difference.

\begin{lem}\label{centralizer-transitive}
For any decomposition $\lambda=\mu+\nu$ the centralizer $Z_{GL_{|\lambda|}(\mathbb{F}_q)}(N)$ of $N$ acts transitively on the set $\mathcal{S}(\lambda,\mu,\nu)$.
\end{lem}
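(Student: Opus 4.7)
The plan is to prove transitivity directly by exhibiting, for any two pairs $(U_+, U_-)$ and $(U_+', U_-')$ in $\mathcal{S}(\lambda,\mu,\nu)$, an element $g\in Z_{\operatorname{GL}_{|\lambda|}(\mathbb{F}_q)}(N)$ that sends the first decomposition to the second. The element $g$ will be constructed by gluing an $\mathbb{F}_q[T]$-module isomorphism on the plus-summand to one on the minus-summand.

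First I would invoke the structure theorem for finitely generated modules over the PID $\mathbb{F}_q[T]$: since $U_+$ and $U_+'$ are both $\mathbb{F}_q[T]$-submodules of $V$ and both isomorphic to $\bigoplus_{j\in\mu}\mathbb{F}_q[T]/(T^j)$, they are isomorphic as $\mathbb{F}_q[T]$-modules. Fix such an isomorphism $\phi_+\colon U_+\to U_+'$; similarly fix $\phi_-\colon U_-\to U_-'$. Being $\mathbb{F}_q[T]$-linear means precisely that $\phi_\pm$ intertwine the action of $N$ on each side, i.e.\ $\phi_\pm\circ N|_{U_\pm}=N|_{U_\pm'}\circ \phi_\pm$.

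Next, using the decomposition $V=U_+\oplus U_-$, define $g\colon V\to V$ by $g(u_++u_-)=\phi_+(u_+)+\phi_-(u_-)$. This is a well-defined $\mathbb{F}_q$-linear map, and it is invertible because its inverse $g^{-1}(v_+'+v_-')=\phi_+^{-1}(v_+')+\phi_-^{-1}(v_-')$ is well-defined via the decomposition $V=U_+'\oplus U_-'$, which holds by hypothesis. A one-line check on the summands shows $gN=Ng$, so $g\in Z_{\operatorname{GL}_{|\lambda|}(\mathbb{F}_q)}(N)$. By construction $g(U_+)=U_+'$ and $g(U_-)=U_-'$, so $g\cdot(U_+,U_-)=(U_+',U_-')$, proving transitivity.

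There is no real obstacle here; the only content is the observation that the isomorphism type of the $\mathbb{F}_q[T]$-module $U_\pm$ is determined by $\mu$ (resp.\ $\nu$), and therefore two such submodules of $V$ with the same multiset of elementary divisors are abstractly isomorphic as $\mathbb{F}_q[T]$-modules. Once that is in place, gluing gives the centralizing element automatically.
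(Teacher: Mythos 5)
Your proof is correct and is essentially identical to the paper's: both invoke the structure theorem over the PID $\mathbb{F}_q[T]$ to identify $U_\pm \cong U_\pm'$ as $\mathbb{F}_q[T]$-modules, then take the direct sum of these isomorphisms to produce an $\mathbb{F}_q[T]$-linear automorphism of $V$, which is precisely an element of $Z_{\operatorname{GL}_{|\lambda|}(\mathbb{F}_q)}(N)$. Your version spells out the invertibility and the intertwining with $N$ a bit more explicitly, but the argument is the same.
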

\begin{proof}
Assume we are given two decompositions $U_+\oplus U_-=V=U'_+\oplus U'_-$ in $\mathcal{S}(\lambda,\mu,\nu)$. Then we have the isomorphisms $U_+\cong \bigoplus_{j\in \mu}\mathbb{F}_q[T]/(T^j)\cong U'_+$ and $U_-\cong \bigoplus_{j\in \nu}\mathbb{F}_q[T]/(T^j)\cong U'_-$ of $\mathbb{F}_q[T]$-modules. Taking the direct sum of these two isomorphisms we obtain an automorphism $g\colon V=U_+\oplus U_-\to U'_+\oplus U'_-=V$. Being $\mathbb{F}_q[T]$-linear it means that $g$ lies in $Z_{GL_{|\lambda|}(\mathbb{F}_q)}(N)$ when viewed as an $\mathbb{F}_q$-linear transformation.
\end{proof}

\begin{pro} If \(\alpha \in (\mathbb{F}_q^*)^2\) then
\[n(C,C;p)=\#\cR_{\alpha}(\lambda)=\sum_{\lambda=\mu+\nu} 
\frac{\# Z_{GL_{|\lambda|}(\mathbb{F}_q)}(N_\lambda)}{\# Z_{GL_{|\mu|}(\mathbb{F}_q)}(N_\mu) \# Z_{GL_{|\nu|}(\mathbb{F}_q)}(N_\nu)}\ .
\]

\end{pro}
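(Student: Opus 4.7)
The plan is to combine the bijection of Lemma~\ref{bijection-decomposition} with the transitivity statement of Lemma~\ref{centralizer-transitive} and invoke the orbit--stabilizer theorem on each piece $\mathcal{S}(\lambda,\mu,\nu)$.

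First I would translate the count via the bijection $Y\leftrightarrow (U_{Y,+},U_{Y,-})$: it gives
\[
\#\mathcal{R}_\alpha(\lambda)=\#\mathcal{S}(\lambda)=\sum_{\lambda=\mu+\nu}\#\mathcal{S}(\lambda,\mu,\nu),
\]
where the sum is over (unordered would undercount; ordered is correct because $U_+$ and $U_-$ are distinguished by being the $+\beta$- versus $-\beta$-eigenspace) decompositions of the multiset $\lambda$ as $\mu+\nu$. The isomorphism class of each summand of a direct-sum decomposition is forced to be a subpartition of $\lambda$ by the uniqueness part of the structure theorem over $\mathbb{F}_q[T]$, so these strata genuinely partition $\mathcal{S}(\lambda)$.

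Next I would apply orbit--stabilizer to the transitive action of $G:=Z_{\operatorname{GL}_{|\lambda|}(\mathbb{F}_q)}(N_\lambda)$ on $\mathcal{S}(\lambda,\mu,\nu)$. Fixing a base point $(U_+^0,U_-^0)\in\mathcal{S}(\lambda,\mu,\nu)$ and writing $N_+$, $N_-$ for the restrictions of $N_\lambda$ to $U_+^0$, $U_-^0$ respectively, the stabilizer consists of those $\mathbb{F}_q[T]$-linear automorphisms of $V$ that preserve both summands. Such an automorphism is nothing but a pair $(g_+,g_-)$ with $g_\pm\in\operatorname{GL}_{\mathbb{F}_q}(U_\pm^0)$ commuting with $N_\pm$; hence the stabilizer is isomorphic to $Z_{\operatorname{GL}_{|\mu|}(\mathbb{F}_q)}(N_\mu)\times Z_{\operatorname{GL}_{|\nu|}(\mathbb{F}_q)}(N_\nu)$. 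Then
\[
\#\mathcal{S}(\lambda,\mu,\nu)=\frac{\#Z_{\operatorname{GL}_{|\lambda|}(\mathbb{F}_q)}(N_\lambda)}{\#Z_{\operatorname{GL}_{|\mu|}(\mathbb{F}_q)}(N_\mu)\,\#Z_{\operatorname{GL}_{|\nu|}(\mathbb{F}_q)}(N_\nu)},
\]
and summing over $\lambda=\mu+\nu$ gives the claimed formula.

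The only delicate step is identifying the stabilizer precisely as the external product of centralizers. One must check that any $\mathbb{F}_q[T]$-linear automorphism of $V$ which happens to preserve $U_+^0$ and $U_-^0$ is determined by, and yields, an arbitrary pair of $\mathbb{F}_q[T]$-linear automorphisms of the two summands; this is routine from the direct-sum decomposition but is the substantive content. Everything else is a bookkeeping application of orbit--stabilizer plus the fact, already noted in the excerpt, that partitions of direct summands of an $\mathbb{F}_q[T]$-module of type $\lambda$ run exactly over pairs $(\mu,\nu)$ with $\mu+\nu=\lambda$.
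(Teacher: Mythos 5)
Your proof is correct and follows essentially the same route as the paper: the bijection $Y\leftrightarrow(U_{Y,+},U_{Y,-})$ to reduce to counting $\mathcal{S}(\lambda)$ stratified by type $(\mu,\nu)$, then orbit--stabilizer for the transitive action of $Z_{\operatorname{GL}_{|\lambda|}(\mathbb{F}_q)}(N_\lambda)$ on each stratum, with the stabilizer identified as $Z_{\operatorname{GL}_{|\mu|}(\mathbb{F}_q)}(N_\mu)\times Z_{\operatorname{GL}_{|\nu|}(\mathbb{F}_q)}(N_\nu)$. Your explicit remark about ordered versus unordered decompositions $\lambda=\mu+\nu$ is a welcome clarification that the paper leaves implicit.
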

\begin{proof}
By Lemma \ref{bijection-decomposition} we obtain $\# \cR_{\alpha}(\lambda)=\# \mathcal{S}(\lambda)=\sum_{\lambda=\mu+\nu}\# \mathcal{S}(\lambda,\mu,\nu)$. The statement follows from Lemma \ref{centralizer-transitive} noting that the stabilizer of a given decomposition $V=U_+\oplus U_-$ in $\mathcal{S}(\lambda,\mu,\nu)$ equals 
\[
Z_{GL_{|\lambda|}(\mathbb{F}_q)}(N_\lambda)\cap (GL_{|\mu|}(\mathbb{F}_q)\times GL_{|\nu|}(\mathbb{F}_q))=Z_{GL_{|\mu|}(\mathbb{F}_q)}(N_\mu)\times Z_{GL_{|\nu|}(\mathbb{F}_q)}(N_\nu)\ .
\]
\end{proof}

Now assume \( \left( \frac{x}{f(x)} \right) = -1\), so we have \(\alpha = \beta^2\) for some \(\beta \in  \mathbb{F}_{q^2}^*\setminus \mathbb{F}_{q}^*\). 
Put $\sigma$ for the nontrivial element in $\operatorname{Gal}(\mathbb{F}_{q^{2}}/\mathbb{F}_{q})$. Then $\varphi=\sigma\otimes I\colon \mathbb{F}_{q^{2}}\otimes_{\mathbb{F}_q}V$ is a $\sigma$-semilinear map (ie.\ $\mathbb{F}_q[T]$-linear with $\varphi(\beta v)=\sigma(\beta)\varphi(v)=-\beta\varphi(v)$). Further, for a $\mathbb{F}_{q^2}[T]$-module homomorphism $f\colon \mathbb{F}_{q^{2}}\otimes_{\mathbb{F}_q}V\to \mathbb{F}_{q^{2}}\otimes_{\mathbb{F}_q}V$ there exists an $\mathbb{F}_q[T]$-module homomorphism $\widetilde{f}\colon V\to V$ with $f=1\otimes\widetilde{f}$ if and only if $f$ commutes with $\varphi$, ie.\ $f\circ\varphi=\varphi\circ f$. 
\begin{lem}\label{transitive-nonsplit}
Assume $\alpha\notin(\mathbb{F}^*_q)^2$. Then the centralizer $Z_{GL_{|\lambda|}(\mathbb{F}_q)}(N_\lambda)$ acts transitively on the set $\mathcal{R}_\alpha(\lambda)$ (by conjugation).
\end{lem}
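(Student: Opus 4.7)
The plan is to pass to the quadratic extension $\mathbb{F}_{q^2}$, where $\alpha=\beta^2$ becomes a square, perform the eigenspace analysis from the split case there, and then descend the resulting conjugator back to $\mathbb{F}_q$ by constructing it symmetrically with respect to the Galois involution $\varphi=\sigma\otimes I$.

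Concretely, write $\widetilde{V}=\mathbb{F}_{q^2}\otimes_{\mathbb{F}_q}V$, and for $Y\in\mathcal{R}_\alpha(\lambda)$ let $\widetilde{U}_{Y,\pm}\subset\widetilde{V}$ be its $(\pm\beta)$-eigenspaces. Since $YN=NY$, these are $\mathbb{F}_{q^2}[T]$-submodules of $\widetilde{V}$, and because $Y$ is defined over $\mathbb{F}_q$ while $\varphi(\beta)=-\beta$, a direct check gives $\varphi(\widetilde{U}_{Y,+})=\widetilde{U}_{Y,-}$. Thus $\widetilde{V}=\widetilde{U}_{Y,+}\oplus\varphi(\widetilde{U}_{Y,+})$ and $Y$ is recovered from the single submodule $\widetilde{U}_{Y,+}$. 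Applying the structure theorem over the PID $\mathbb{F}_{q^2}[T]$, and using that the $\mathbb{F}_q[T]$-linear, $\sigma$-semilinear bijection $\varphi$ preserves partition type, both summands have a common type $\mu$; hence $\lambda=\mu+\mu$ (which also explains the parenthetical before the lemma), and this type $\mu$ depends only on $\widetilde{V}$, not on the choice of $Y$.

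For transitivity, take a second $Y'\in\mathcal{R}_\alpha(\lambda)$ with $\beta$-eigenspace $\widetilde{U}_{Y',+}$ and pick any $\mathbb{F}_{q^2}[T]$-module isomorphism $g_+\colon\widetilde{U}_{Y,+}\xrightarrow{\sim}\widetilde{U}_{Y',+}$, which exists because both sides have the same type $\mu$. Define $g\in\operatorname{GL}(\widetilde{V})$ by
\[
g\bigl(u+\varphi(v)\bigr)=g_+(u)+\varphi\bigl(g_+(v)\bigr),\qquad u,v\in\widetilde{U}_{Y,+}.
\]
I then verify three things, all of which are one-line computations from the definitions: (i) $g$ commutes with $\varphi$, so by the descent criterion recorded just before the lemma it comes from an element of $\operatorname{GL}(V)$; (ii) $g$ commutes with $N$, since $g_+$ is $T$-linear and $N$ commutes with $\varphi$; (iii) $g$ maps $\widetilde{U}_{Y,\pm}$ to $\widetilde{U}_{Y',\pm}$ eigenvalue by eigenvalue, so $gYg^{-1}=Y'$. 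This produces the required element of $Z_{\operatorname{GL}_{|\lambda|}(\mathbb{F}_q)}(N_\lambda)$ conjugating $Y$ to $Y'$.

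The one genuine obstacle is the descent step. Invoking Lemma~\ref{centralizer-transitive} blindly over $\mathbb{F}_{q^2}$ would produce a conjugator defined only over $\mathbb{F}_{q^2}$, and a generic such conjugator need not commute with $\varphi$; modifying it to be Galois-invariant amounts to killing a $1$-cocycle with values in the stabilizer of $\widetilde{U}_{Y,+}$. The symmetric formula above sidesteps this cohomological issue by building Galois invariance into the construction from the single datum $g_+$ on the $\beta$-eigenspace, rather than trying to repair an arbitrary $\mathbb{F}_{q^2}$-conjugator after the fact.
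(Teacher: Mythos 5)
Your proof is correct and is essentially the same argument as in the paper: both base change to $\mathbb{F}_{q^2}$, identify $\varphi$ as swapping the $\pm\beta$-eigenspaces (whence $\lambda=\mu+\mu$ and the common type $\mu$), pick an $\mathbb{F}_{q^2}[T]$-isomorphism on the $\beta$-eigenspace, extend it to all of $\widetilde V$ by the symmetric formula $g(\varphi(v))=\varphi(g_+(v))$, and descend using the recorded criterion that $\varphi$-equivariant maps come from $V$. The closing remark about why one cannot just quote the split-case transitivity over $\mathbb{F}_{q^2}$ and hope the conjugator is Galois-equivariant is a sound observation, and the paper's construction sidesteps it in exactly the same way.
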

\begin{proof}
Let $Y,Y'\in \mathcal{R}_\alpha(\lambda)$, ie.\ $Y,Y'\colon V\to V$ are both $\mathbb{F}_q[T]$-linear isomorphisms with $Y^2=\alpha I=Y'^2$. Put $U\leq \mathbb{F}_{q^2}\otimes_{\mathbb{F}_q}V$ (resp.\ $U'\leq \mathbb{F}_{q^2}\otimes_{\mathbb{F}_q}V$) for the $\beta$-eigenspace of $1\otimes Y$ (resp.\ of $1\otimes Y'$). Then $\varphi(U)$ (resp.\ $\varphi(U')$) is the $\sigma(\beta)=-\beta$-eigenspace of $Y$ (resp.\ of $Y'$). In particular, we have $U\oplus \varphi(U)=\mathbb{F}_{q^2}\otimes_{\mathbb{F}_q}V=U'\oplus \varphi(U')$. Now if $U\cong \bigoplus_j\mathbb{F}_{q^2}[T]/(T^{m_j})$ for some partition $\mu=[m_1,\dots,m_s]$ then we have the isomorphism
\[
\varphi(U)\cong \bigoplus_j\mathbb{F}_{q^2}[T]/(\sigma(T)^{m_j})\cong \bigoplus_j\mathbb{F}_{q^2}[T]/(T^{m_j})\cong U 
\]
of $\mathbb{F}_{q^2}[T]$-modules whence $\lambda=\mu+\mu$. Similarly, $U'\cong \varphi(U')$. By the structure theorem for finitely generated modules over the PID $\mathbb{F}_{q^2}[T]$, we must have $U\cong U'$ as $\mathbb{F}_{q^2}[T]$-modules, as well. Taking such an isomorphism $S\colon U\to U'$ we also define $S(\varphi(u)):=\varphi(S(u))$ on $\varphi(U)$ giving rise to an $\mathbb{F}_{q^2}[T]$-linear automorphism $S\colon \mathbb{F}_{q^2}\otimes_{\mathbb{F}_q}V=U\oplus\varphi(U)\to U'\oplus\varphi(U')=\mathbb{F}_{q^2}\otimes_{\mathbb{F}_q}V$ that satisfies $S(1\otimes Y)S^{-1}=1\otimes Y'$. Moreover, $S$ descends to a map $\widetilde{S}\colon V\to V$ (such that $S=1\otimes \widetilde{S}$) since it commutes with $\varphi$. Finally, $\widetilde{S}$ satisfies $\widetilde{S}Y\widetilde{S}^{-1}=Y'$ and lies in the centralizer of $N_\lambda$ as it is $\mathbb{F}_q[T]$-linear.
\end{proof}

\begin{pro}
Assume $\alpha\notin(\mathbb{F}^*_q)^2$. Then we have $\lambda=\mu+\mu$ for some partition $\mu$ and
\[
n(C,C;p)=\#\mathcal{R}_\alpha(\lambda)=\frac{\# Z_{GL_{|\lambda|}(\mathbb{F}_q)}(N_\lambda)}{\# Z_{GL_{|\mu|}(\mathbb{F}_{q^2})}(N_\mu)}\ .
\]
\end{pro}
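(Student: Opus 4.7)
The plan is to apply the orbit-stabilizer theorem to the transitive action supplied by Lemma~\ref{transitive-nonsplit}. Fixing any $Y_0 \in \mathcal{R}_\alpha(\lambda)$, we immediately get
\[
\#\mathcal{R}_\alpha(\lambda) \;=\; \frac{\# Z_{\operatorname{GL}_{|\lambda|}(\mathbb{F}_q)}(N_\lambda)}{\#\bigl(Z_{\operatorname{GL}_{|\lambda|}(\mathbb{F}_q)}(N_\lambda) \cap Z_{\operatorname{GL}_{|\lambda|}(\mathbb{F}_q)}(Y_0)\bigr)},
\]
so the whole proof reduces to identifying the joint centralizer in the denominator with $Z_{\operatorname{GL}_{|\mu|}(\mathbb{F}_{q^2})}(N_\mu)$. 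The factorization $\lambda = \mu+\mu$ itself was already produced inside the proof of Lemma~\ref{transitive-nonsplit}, so I take it as given here.

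For the structural identification I would use $Y_0$ to enrich $V$ with an $\mathbb{F}_{q^2}$-structure: since $Y_0^2=\alpha I$ and $\alpha$ is not a square in $\mathbb{F}_q$, the subring $\mathbb{F}_q[Y_0]\subset\operatorname{End}_{\mathbb{F}_q}(V)$ is isomorphic to $\mathbb{F}_q[t]/(t^2-\alpha)\cong\mathbb{F}_{q^2}$, so $V$ becomes an $\mathbb{F}_{q^2}$-space of dimension $|\lambda|/2$. Because $N_\lambda$ commutes with $Y_0$, it is $\mathbb{F}_{q^2}$-linear, hence $V$ is in a natural way an $\mathbb{F}_{q^2}[T]$-module with $T$ acting as $N_\lambda$. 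An $\mathbb{F}_q$-linear automorphism $g$ commutes with both $Y_0$ and $N_\lambda$ if and only if it is $\mathbb{F}_{q^2}[T]$-linear, so
\[
Z_{\operatorname{GL}_{|\lambda|}(\mathbb{F}_q)}(N_\lambda) \cap Z_{\operatorname{GL}_{|\lambda|}(\mathbb{F}_q)}(Y_0) \;=\; \operatorname{Aut}_{\mathbb{F}_{q^2}[T]}(V).
\]

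The remaining step, and the one I expect to be the main obstacle, is verifying that the $\mathbb{F}_{q^2}[T]$-module type of $V$ (under this $Y_0$-induced scalar action) is precisely the partition $\mu$. I would handle it by extending scalars back to $\mathbb{F}_{q^2}$: inside $\mathbb{F}_{q^2}\otimes_{\mathbb{F}_q}V$ the $\beta$-eigenspace $U$ of $1\otimes Y_0$ already appeared in Lemma~\ref{transitive-nonsplit} with type $\mu$. One then checks that
\[
\psi\colon V \to U,\qquad v\mapsto \beta\otimes v + 1\otimes Y_0 v
\]
is an isomorphism of $\mathbb{F}_{q^2}[T]$-modules, where the source carries the $\mathbb{F}_{q^2}$-structure via $Y_0$ and the target via left multiplication on the first tensor factor. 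The image lies in $U$ by a one-line computation using $Y_0^2=\alpha I$, $\mathbb{F}_{q^2}$-linearity is a second one-line check using the same identity, $T$-equivariance is automatic from $[Y_0,N_\lambda]=0$, and injectivity follows because $\{1,\beta\}$ is an $\mathbb{F}_q$-basis of $\mathbb{F}_{q^2}$; a dimension count finishes.

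Once $V$ has been identified as an $\mathbb{F}_{q^2}[T]$-module of type $\mu$, Proposition~\ref{prop:centralizer} applied over $\mathbb{F}_{q^2}$ gives $\#\operatorname{Aut}_{\mathbb{F}_{q^2}[T]}(V)=\#Z_{\operatorname{GL}_{|\mu|}(\mathbb{F}_{q^2})}(N_\mu)$, and substituting into the orbit-stabilizer formula yields the claim. A purely internal alternative, should the scalar-extension bookkeeping prove awkward, is to compute $\dim_{\mathbb{F}_{q^2}}\ker N_\lambda^j = \tfrac12\dim_{\mathbb{F}_q}\ker N_\lambda^j$ directly and read off the dual partition of $\mu$ from that of $\lambda$.
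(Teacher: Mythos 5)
Your proof is correct and follows essentially the same approach as the paper's: apply orbit-stabilizer to the transitive action from Lemma~\ref{transitive-nonsplit}, then identify the stabilizer of a fixed $Y_0$ (the paper uses $C_s$) with $Z_{\operatorname{GL}_{|\mu|}(\mathbb{F}_{q^2})}(N_\mu)$ by endowing $V$ with the $\mathbb{F}_{q^2}$-structure in which $Y_0$ acts as $\beta$. Your explicit isomorphism $\psi\colon V\to U$ fills in a detail the paper dismisses as ``a moment's thought,'' namely that $V$ with this $\mathbb{F}_{q^2}[T]$-structure has module type $\mu$; both your $\psi$-argument and the alternative via $\dim_{\mathbb{F}_{q^2}}\ker N_\lambda^j=\tfrac12\dim_{\mathbb{F}_q}\ker N_\lambda^j$ are valid ways to establish this.
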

\begin{proof}
By Lemma \ref{transitive-nonsplit} $\mathcal{R}_\alpha(\lambda)$ is the conjugacy class of $C_s$ in $Z_{GL_{|\lambda|}(\mathbb{F}_q)}(N_\lambda)$. A moments thought shows that we may define an \(\mathbb{F}_{q^2}\)-linear structure on \(V\) where \(C_s\) acts via multiplication by \(\beta\) where the $\mathbb{F}_{q^2}$-linear maps are exactly those which are $\mathbb{F}_q$-linear and commute with $C_\beta$. In particular, the centralizer of $C_\beta$ in $Z_{GL_{|\lambda|}(\mathbb{F}_q)}(N_\lambda)$ equals $Z_{GL_{|\mu|}(\mathbb{F}_{q^2})}(N_\mu)$.
\end{proof}

This leads to formula \eqref{eq:sqrt-count-of-nonsquare}.

\begin{cor}\label{cor:reg-estimate}
Assume that $C$ has minimal polynomial $f(x)^k$ where $f(x)\neq x$ is irreducible and $p\neq 2$. Then we have $n(C,C,p)<\frac{q^2+1}{q^2-1}q^{\lfloor\frac{n^2}{2\deg^2 f}\rfloor}=\frac{q^2+1}{q^2-1}p^{\deg f\lfloor\frac{n^2}{2\deg^2 f}\rfloor}\leq \frac{p^2+1}{p^2-1}p^{\frac{n^2}{2}}$ and if $k=1$ then there exists a constant $0<c(q)<1$ (with $\lim_{q\to\infty}c(q)=1$) such that  
\[
c(q)q^{\lfloor\frac{n^2}{2\deg^2 f}\rfloor}<n(C,C;p)<\frac{q^2+1}{q^2-1}q^{\lfloor\frac{n^2}{2\deg^2 f}\rfloor}\ .
\]
In particular
\[
n(C,C;p)\leq 2 p^{n^2/2\deg f}.
\]
\end{cor}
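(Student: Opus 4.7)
The plan is to apply Theorem~\ref{thm:quadr-matrix-explicit} to express $N^*(C,C,p)$ as one of the explicit partition sums $N^*_\pm(\lambda,q)$ (determined by $\left(\tfrac{x}{f(x)}\right)$), and then substitute the centralizer formula $\#Z_{\operatorname{GL}_n(\mathbb{F}_q)}(N_\lambda)=q^{\sum_j d_j(\lambda)^2}\phi_\lambda(1/q)$ from Proposition~\ref{prop:centralizer}. Throughout write $d=\deg f$; the pure-nilpotent case $f(x)=x$ is excluded by hypothesis.

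In the split case, the key identity $d_j(\lambda)=d_j(\mu)+d_j(\nu)$ whenever $\lambda=\mu+\nu$ reduces each ratio in $N^*_+$ to
\[
\frac{\#Z_{\operatorname{GL}_{|\lambda|}(\mathbb{F}_q)}(N_\lambda)}{\#Z_{\operatorname{GL}_{|\mu|}(\mathbb{F}_q)}(N_\mu)\,\#Z_{\operatorname{GL}_{|\nu|}(\mathbb{F}_q)}(N_\nu)}=q^{2\sum_j d_j(\mu)d_j(\nu)}\,\frac{\phi_\lambda(1/q)}{\phi_\mu(1/q)\phi_\nu(1/q)}.
\]
The AM--GM inequality $2d_j(\mu)d_j(\nu)\le d_j(\lambda)^2/2$ combined with $\sum_j d_j(\lambda)^2\le|\lambda|^2=(n/d)^2$ bounds each exponent by $\lfloor n^2/(2d^2)\rfloor$ (integrality helps since $2\sum_j d_j(\mu)d_j(\nu)\in\mathbb{Z}$). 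To sum over decompositions, I write the deficit from the maximum as $2\sum_j(d_j(\mu)-d_j(\lambda)/2)^2$, a sum of squared half-integer shifts. Grouping decompositions by this deficit turns the sum into a Gaussian-type series bounded by $\sum_{k\in\mathbb{Z}}q^{-2k^2}\le\frac{q^2+1}{q^2-1}$, once the $\phi$-ratios have been absorbed by an elementary estimate. The non-split case is easier: $N^*_-(\lambda,q)$ is a single fraction, and direct substitution of the centralizer formulas for $\operatorname{GL}_{|\lambda|}(\mathbb{F}_q)$ and $\operatorname{GL}_{|\mu|}(\mathbb{F}_{q^2})$ yields a bound of the same shape.

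For the lower bound when $k=1$, $\lambda'$ has $k=1$ part so $\lambda=[1^s]$ with $s=n/d$. The summand for the most balanced decomposition $(\mu,\nu)$ already contributes $q^{\lfloor s^2/2\rfloor}\prod_{j\ge 1}(1-q^{-j})$, producing $c(q)q^{\lfloor n^2/(2d^2)\rfloor}$ with $c(q)\to 1$ as $q\to\infty$. The ``In particular'' bound $N^*(C,C;p)\le 2p^{n^2/(2d)}$ is then immediate from $\tfrac{q^2+1}{q^2-1}\le 2$ for $q\ge 3$ (which holds as $p\ne 2$) and $q^{\lfloor n^2/(2d^2)\rfloor}\le q^{n^2/(2d^2)}=p^{n^2/(2d)}$.

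The hard part will be executing the sum over $\mu+\nu=\lambda$ cleanly enough to recover exactly the constant $\frac{q^2+1}{q^2-1}$: the ratios $\phi_\lambda(1/q)/\bigl(\phi_\mu(1/q)\phi_\nu(1/q)\bigr)$ can exceed $1$, so one must verify that the quadratic decay $q^{-2k^2}$ from the deficit dominates this inflation without leaving behind a partition-dependent factor such as the total number of decompositions of $\lambda$, and one has to handle the odd-$s$ case where there are two near-balanced decompositions rather than a single maximum.
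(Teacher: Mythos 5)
Your plan diverges from the paper's proof in a way that leaves a genuine gap, and you yourself flag where it lies ("the hard part will be executing the sum..."). The paper's proof does \emph{not} attack the general-$\lambda$ sum of Theorem~\ref{thm:quadr-matrix-explicit} directly. The key observation you are missing is a monotonicity reduction: since $n(C,C,p)$ counts square roots of $C_s^2$ that commute with $C_u$ (Lemmas~\ref{lem:unipotent-unique} and \ref{Fq-linear-structure}), dropping the commutation constraint only enlarges the count, and dropping it is exactly the case $C_u=I$, i.e.\ $\lambda=[1^{n_1}]$ with $n_1=n/\deg f$. With that reduction the sum $N^*_+(\lambda,q)$ collapses to $\sum_{j=0}^{n_1}\#\operatorname{GL}_{n_1}(\mathbb{F}_q)/\bigl(\#\operatorname{GL}_j(\mathbb{F}_q)\#\operatorname{GL}_{n_1-j}(\mathbb{F}_q)\bigr)$, a one-parameter Gaussian-binomial sum whose dominant term is $q^{n_1^2-j^2-(n_1-j)^2}\le q^{\lfloor n_1^2/2\rfloor}$ and whose tail is a genuine single-variable geometric series $1+\sum_{j\ge1}2q^{-2j^2}\le 1+\sum_{j\ge1}2q^{-2j}=\frac{q^2+1}{q^2-1}$; the $\phi$-ratio is $\prod_{i=1}^{n_1}(1-q^{-i})/\bigl(\prod_{i=1}^j(1-q^{-i})\prod_{i=1}^{n_1-j}(1-q^{-i})\bigr)<1$ in this situation, so it poses no difficulty.

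Without that reduction, the two obstructions you name are real and are not resolved by the sketch. First, the "deficit" $\sum_j d_j(\lambda)^2/2-2\sum_j d_j(\mu)d_j(\nu)=2\sum_j\bigl(d_j(\mu)-d_j(\lambda)/2\bigr)^2$ is not a single integer $k$; it is indexed by the full vector $(d_j(\mu))_j$, and the number of sub-multisets $\mu\subseteq\lambda$ attaining a given deficit grows with the number of distinct part sizes of $\lambda$. Summing $q^{-\text{deficit}}$ therefore produces, in the worst case, a product of geometric series (one per part size), not the single series $\sum_{k\in\mathbb{Z}}q^{-2k^2}$, so the constant you would obtain depends on $\lambda$ rather than being the uniform $\tfrac{q^2+1}{q^2-1}$. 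Second, for general $\lambda,\mu,\nu$ the ratio $\phi_\lambda(1/q)/\bigl(\phi_\mu(1/q)\phi_\nu(1/q)\bigr)$ really can exceed $1$ (already $\lambda=[1,1]$, $\mu=\nu=[1]$ gives $1+1/q$), and absorbing it would need a concrete estimate you do not supply. The lower bound for $k=1$ is fine in spirit (and coincides with the paper's: one keeps only the most balanced $j$ and bounds $\prod_{i=1}^{n_1}(1-q^{-i})>c(q)=\prod_{i\ge1}(1-q^{-i})$), but the upper bound, as you have set it up, does not go through. Insert the $C_u=I$ reduction first and the rest becomes the elementary computation you were aiming for.
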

\begin{proof}
Since $n(C,C,p)$ is the number of square roots of $C_s^2$ commuting with $C_u$, the case $C_u=I$ gives an upper bound for the number of solutions in general. So we may assume $N=0$. Put $n_1:=\dim_{\mathbb{F}_q}V=\frac{n}{\deg f}$. Then in the split case we compute
\begin{align*}
n(C,C;p)=\sum_{j=0}^{n_1}\frac{\#GL_{n_1}(\mathbb{F}_q)}{\#GL_{j}(\mathbb{F}_q)\#GL_{n_1-j}(\mathbb{F}_q)}=\\
\sum_{j=0}^{n_1}\frac{(q^{n_1}-1)\dots (q^{n_1}-q^{n_1-1})}{(q^j-1)\dots (q^j-q^{j-1})(q^{n_1-j}-1)\dots (q^{n_1-j}-q^{n_1-j-1})}=\\
\sum_{j=0}^{n_1}q^{n_1^2-j^2-(n_1-j)^2}\frac{(1-1/q^{n_1})\dots (1-1/q)}{(1-1/q^j)\dots (1-1/q)(1-1/q^{n_1-j})\dots (1-1/q)}<\\
\sum_{j=0}^{n_1}q^{n_1^2-j^2-(n_1-j)^2}<q^{\lfloor\frac{n_1^2}{2}\rfloor}(1+\sum_{j=1}^\infty\frac{2}{q^{2j}})=\frac{q^2+1}{q^2-1}q^{\lfloor\frac{n_1^2}{2}\rfloor}\ .
\end{align*}
On the other hand, we have
\begin{align*}
n(C,C;p)=\sum_{j=0}^{n_1}q^{n_1^2-j^2-(n_1-j)^2}\frac{(1-1/q^{n_1})\dots (1-1/q)}{(1-1/q^j)\dots (1-1/q)(1-1/q^{n_1-j})\dots (1-1/q)}>\\
q^{n_1^2-\lfloor \frac{n_1}{2}\rfloor^2-\lceil\frac{n_1}{2}\rceil^2}(1-1/q^{n_1})\dots (1-1/q)>c(q)q^{\lceil\frac{n_1^2}{2}\rceil}
\end{align*}
with constant $c(q)=\prod_{j=1}^\infty(1-1/q^j)$ that clearly satisfies $\lim_{q\to\infty}c(q)=1$.
\end{proof}

Finally, assume $p=2$. Since the $2$-Frobenius is bijective on finite fields of characteristic $2$, $C_s^2$ has a unique square root $Y_s=C_s$. So we need to count the square roots of the unipotent matrix $C_u^2$ or equivalently the square roots of the nilpotent matrix $C_u^2+I=(C_u+I)^2$.
\begin{lem}\label{square-jordan-block}
Assume that $q$ is a power of $2$. For any integer $n>0$ we have the identification
\[
\mathbb{F}_q[T]/(T^n)\cong \mathbb{F}_q[T^2]/((T^2)^{\lfloor\frac{n}{2}\rfloor})\oplus \mathbb{F}_q[T^2]/((T^2)^{\lceil\frac{n}{2}\rceil})
\]
as $\mathbb{F}_q[T^2]$-modules.
\end{lem}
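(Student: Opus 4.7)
The plan is to decompose the quotient ring $R := \mathbb{F}_q[T]/(T^n)$, regarded as an $\mathbb{F}_q[T^2]$-module, into its ``even part'' and its ``odd part'' and then identify each piece with one of the claimed cyclic summands. Note that this step is in fact characteristic-free: what makes it relevant for $p=2$ is only the preceding application to taking square roots of unipotents, not the decomposition itself.

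Concretely, I would let $R_{\mathrm{ev}}$ be the $\mathbb{F}_q$-span of $\{1, T^2, T^4, \ldots\}$ and $R_{\mathrm{od}}$ the $\mathbb{F}_q$-span of $\{T, T^3, T^5, \ldots\}$ inside $R$. As $\mathbb{F}_q$-vector spaces $R = R_{\mathrm{ev}} \oplus R_{\mathrm{od}}$ since the monomials $1, T, \ldots, T^{n-1}$ form an $\mathbb{F}_q$-basis. Both subspaces are stable under multiplication by $T^2$, hence they are $\mathbb{F}_q[T^2]$-submodules, and the decomposition is one of $\mathbb{F}_q[T^2]$-modules. I would then observe that $R_{\mathrm{ev}}$ is the cyclic $\mathbb{F}_q[T^2]$-module generated by $1$, with $\mathbb{F}_q$-basis $\{T^{2i} : 0 \le 2i \le n-1\}$ of cardinality $\lceil n/2\rceil$, so that its annihilator in $\mathbb{F}_q[T^2]$ is $((T^2)^{\lceil n/2\rceil})$; similarly $R_{\mathrm{od}}$ is cyclic on the generator $T$, with $\mathbb{F}_q$-basis $\{T^{2i+1} : 0 \le 2i+1 \le n-1\}$ of cardinality $\lfloor n/2\rfloor$ and annihilator $((T^2)^{\lfloor n/2\rfloor})$.

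Putting these identifications together gives the isomorphism
\[
R \;=\; R_{\mathrm{ev}} \oplus R_{\mathrm{od}} \;\cong\; \mathbb{F}_q[T^2]/((T^2)^{\lceil n/2\rceil}) \oplus \mathbb{F}_q[T^2]/((T^2)^{\lfloor n/2\rfloor}),
\]
which is the statement. A sanity check: the $\mathbb{F}_q$-dimensions on both sides are $\lceil n/2\rceil + \lfloor n/2\rfloor = n$, as required. The only real work is the bookkeeping of the exponents, which is a short case split on the parity of $n$: for $n=2m$ each summand has $\mathbb{F}_q$-dimension $m$, and for $n=2m+1$ the even part has dimension $m+1$ while the odd part has dimension $m$, matching $\lceil n/2\rceil$ and $\lfloor n/2\rfloor$ respectively. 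There is no essential obstacle here; the statement is really a direct consequence of separating monomials by parity of degree.
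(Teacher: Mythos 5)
Your proof is correct and is essentially the argument the paper has in mind: the paper simply cites the standard fact that the square of a nilpotent Jordan block of size $n$ splits into blocks of sizes $\lfloor n/2\rfloor$ and $\lceil n/2\rceil$, and your even/odd monomial decomposition is a direct, self-contained proof of exactly that fact in module language. The bookkeeping of exponents and annihilators checks out.
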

\begin{proof}
This amounts to the fact that the square of a nilpotent Jordan block of size $n$ splits into two blocks of size $\lfloor\frac{n}{2}\rfloor$ and $\lceil\frac{n}{2}\rceil$.
\end{proof}

\begin{pro}
Assume $q$ is a power of $2$. Then the number of solutions of the matrix equation $Y^2=C^2$ equals
\[
\sum_\mu\frac{\# Z_{GL_n(\mathbb{F}_q)}(N_\lambda^2)}{\# Z_{GL_n(\mathbb{F}_q)}(N_\mu)}
\]
where $\mu=[m_1,\dots,m_k]$ runs on the set of partitions such that 
\[
\left[\lfloor\frac{m_1}{2}\rfloor,\lceil\frac{m_1}{2}\rceil,\dots,\lfloor\frac{m_k}{2}\rfloor,\lceil\frac{m_k}{2}\rceil\right]=\left[\lfloor\frac{n_1}{2}\rfloor,\lceil\frac{n_1}{2}\rceil,\dots,\lfloor\frac{n_k}{2}\rfloor,\lceil\frac{n_k}{2}\rceil\right]\ .
\]

\end{pro}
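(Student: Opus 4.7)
\emph{Proof plan.} My plan is to reduce the equation $Y^2 = C^2$ in characteristic $2$ to a purely nilpotent equation $M^2 = N_\lambda^2$, and then perform an orbit--stabilizer count stratified by the Jordan type of $M$. The first step is to pin down the semisimple part of any solution. Writing $Y = Y_s Y_u$ in multiplicative Jordan--Chevalley form, uniqueness of Jordan--Chevalley applied to $Y^2 = C^2 = C_s^2 C_u^2$ forces $Y_s^2 = C_s^2$ and $Y_u^2 = C_u^2$. Since $Y_s$ is semisimple and commutes with $Y_s^2 = C_s^2$, it preserves each eigenspace of $C_s^2$; on such an eigenspace with eigenvalue $\alpha$, $Y_s$ is a semisimple transformation whose square equals $\alpha I$, and in characteristic $2$ the unique such semisimple element is the scalar $\sqrt{\alpha}$. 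Hence $Y_s = C_s$. By Lemma~\ref{Fq-linear-structure} $C_s$ acts on $V$ as an $\mathbb{F}_q$-scalar, so the commutation $Y_s Y_u = Y_u Y_s$ is automatic. Setting $M := Y_u + I$ and $N_\lambda := C_u + I$, both nilpotent since $-I = I$ in characteristic $2$, and using $(M+I)^2 = M^2 + I$, the problem reduces to counting nilpotent matrices $M$ on $V$ with $M^2 = N_\lambda^2$.

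Next I would stratify the nilpotent solutions by Jordan type $\mu = [m_1,\dots,m_k]$. By Lemma~\ref{square-jordan-block} (applied blockwise) the Jordan type of $M^2$ is obtained by splitting each $m_j$ into two parts $\lfloor m_j/2\rfloor$ and $\lceil m_j/2\rceil$. Since $M^2 = N_\lambda^2$, these must coincide with the corresponding splittings of $\lambda$, giving precisely the admissibility condition on $\mu$ in the statement.

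Finally, for each admissible $\mu$ I would write $M = g N_\mu g^{-1}$ with $g \in \operatorname{GL}_n(\mathbb{F}_q)$; the constraint $M^2 = N_\lambda^2$ then reads $g N_\mu^2 g^{-1} = N_\lambda^2$. The set $T$ of such $g$ is nonempty by admissibility and forms a single coset of $Z_{\operatorname{GL}_n(\mathbb{F}_q)}(N_\mu^2)$, of cardinality $\# Z_{\operatorname{GL}_n(\mathbb{F}_q)}(N_\lambda^2)$ (the two centralizers have equal order since $N_\mu^2$ and $N_\lambda^2$ are conjugate). The map $g \mapsto g N_\mu g^{-1}$ sends $T$ onto the set of nilpotent $M$ of Jordan type $\mu$ satisfying $M^2 = N_\lambda^2$, with fibers that are left cosets of $Z_{\operatorname{GL}_n(\mathbb{F}_q)}(N_\mu)$ inside $T$ (any element of $Z_{\operatorname{GL}_n(\mathbb{F}_q)}(N_\mu)$ also fixes $N_\mu^2$ and hence preserves $T$). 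Thus each $\mu$ contributes $\# Z_{\operatorname{GL}_n(\mathbb{F}_q)}(N_\lambda^2)/\# Z_{\operatorname{GL}_n(\mathbb{F}_q)}(N_\mu)$ solutions, and summing over admissible $\mu$ yields the claimed formula. The main subtle point is the characteristic-$2$ reduction $Y^2 = C^2 \leftrightarrow M^2 = N_\lambda^2$, which requires both the rigidity of the semisimple part (via bijectivity of Frobenius) and the identification $-I = I$; once this is in place the remaining orbit--stabilizer step is routine.
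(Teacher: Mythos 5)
Your proposal is correct and follows essentially the same route as the paper: reduce via uniqueness of the Jordan--Chevalley decomposition and bijectivity of the $2$-Frobenius to counting nilpotent $M$ with $M^2 = N_\lambda^2$, identify the admissible Jordan types via Lemma~\ref{square-jordan-block}, and then perform an orbit--stabilizer count to get $\# Z(N_\lambda^2)/\# Z(N_\mu)$ per admissible type. The only cosmetic difference is that the paper phrases the last step as counting fibers of the $\operatorname{GL}_n$-equivariant squaring map between conjugacy classes, whereas you make the same count explicit via cosets of centralizers; these are the same computation.
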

\begin{proof}
By Lemma \ref{square-jordan-block} $N_\mu^2$ is similar to $N_\lambda^2$ if and only if  
\[
\left[\lfloor\frac{m_1}{2}\rfloor,\lceil\frac{m_1}{2}\rceil,\dots,\lfloor\frac{m_k}{2}\rfloor,\lceil\frac{m_k}{2}\rceil\right]=\left[\lfloor\frac{n_1}{2}\rfloor,\lceil\frac{n_1}{2}\rceil,\dots,\lfloor\frac{n_k}{2}\rfloor,\lceil\frac{n_k}{2}\rceil\right]\ .
\]
So for each such $\mu$ we are reduced to determine the cardinality of the fiber at $N_\lambda^2$ of the map 
\begin{eqnarray*}
\{\text{conjugacy class of }N_\mu\} &\to& \{\text{conjugacy class of }N_\lambda^2\}\\
M&\mapsto& M^2\ .
\end{eqnarray*}
However, all the fibers of the above map have the same cardinality by conjugation, so the number of solutions to the above equation is 
\[
\frac{\# \{\text{conjugacy class of }N_\mu\}}{\# \{\text{conjugacy class of }N_\lambda^2\}}=\frac{\# Z_{GL_n(\mathbb{F}_q)}(N_\lambda^2)}{\# Z_{GL_n(\mathbb{F}_q)}(N_\mu)}\ .
\]
\end{proof}

\subsection{The proof of Theorem~\ref{thm:quadr-matrix-explicit} in the nilpotent case}

\begin{lem}\label{X-invariant-Ker-Im}
Assume $XCX=C$ for some $C\in M_n(\mathbb{F}_p) $ and $X\in \mathrm{GL}_n(\mathbb{F}_p)$. Then for any integer $j\geq 1$ the subspaces $\ker(C^j)$ and $\mathrm{Im}(C^j)$ are $X$-invariant.
\end{lem}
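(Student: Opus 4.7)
The plan is to extract from the identity $XCX=C$ the commutation relation $XC^2=C^2X$, i.e.\ that $X$ centralises $C^2$, and then to read off $X$-invariance of $\ker(C^j)$ and $\mathrm{Im}(C^j)$ as an essentially immediate consequence.

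First I would rewrite $XCX=C$ in the two equivalent forms $XC=CX^{-1}$ and $CX=X^{-1}C$, both available since $X$ is invertible. A direct three-step manipulation then yields
\[
XC^2 = (XC)C = (CX^{-1})C = C(X^{-1}C) = C(CX) = C^2X,
\]
so $X$ commutes with $C^2$. A short parity induction on $j$ upgrades this to $XC^j = C^j X$ when $j$ is even and $XC^j = C^jX^{-1}$ when $j$ is odd; in particular, for every $j\geq 1$ there exists $\varepsilon(j)\in\{\pm 1\}$ with $C^jX = X^{\varepsilon(j)}C^j$.

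With this intertwining relation the lemma is one line in each case. For $v\in\ker(C^j)$ we have $C^j(Xv)=X^{\varepsilon(j)}(C^jv)=0$, giving $X(\ker C^j)\subseteq \ker C^j$, promoted to equality by invertibility of $X$. For $w=C^jv\in\mathrm{Im}(C^j)$ the identity $Xw=XC^jv=C^j(X^{\varepsilon(j)}v)$ exhibits $Xw$ again as an element of $\mathrm{Im}(C^j)$.

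There is no real obstacle: the entire content is the observation that $X$ commutes with $C^2$, which is the same phenomenon underlying the substitution $Y=CX$, $Y^2=C^2$ used at the start of the preceding subsection.
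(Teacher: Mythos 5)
Your proof is correct and follows essentially the same route as the paper's: both derive the intertwining relations $C^jX = X^{(-1)^j}C^j$ and $XC^j = C^jX^{(-1)^j}$ by induction from $CX=X^{-1}C$ and $XC=CX^{-1}$, then read off invariance of $\ker(C^j)$ and $\mathrm{Im}(C^j)$ directly. The preliminary observation that $X$ commutes with $C^2$ is a pleasant framing but does not change the argument.
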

\begin{proof}
Since $X$ is invertible we may write $CX=X^{-1}C$ and $XC=CX^{-1}$, so by induction on $j$ we deduce $C^jX=X^{(-1)^j}C^j$ and $XC^j=C^jX^{(-1)^j}$. Therefore if $v\in \ker(C^j)$ then we have $C^jXv=X^{(-1)^j}C^jv=0$, ie.\ $Xv\in \ker(C^j)$. On the other hand, we compute $XC^jw=C^jX^{(-1)^j}w\in \mathrm{Im}(C^j)$ for any $w\in \mathbb{F}_p^n$.
\end{proof}

\begin{pro}\label{formula-nilpotent}
Let $C\in M_n(\mathbb{F}_p) $ be a nilpotent matrix such that there are $r_j$ blocks of size $j\times j$ ($j=1,\dots,k$) in the Jordan normal form of $C$. Then the number of solutions of the equation $XCX=C$ in $X\in \mathrm{GL}_n(\mathbb{F}_p)$ equals
\[
\left(\prod_{j=1}^k(p^{r_j}-1)(p^{r_j}-p)\dots(p^{r_j}-p^{r_j-1})\right)\cdot p^{\sum_{j=1}^k(d_j^2-r_j^2)}=p^{\sum_{j=1}^k d_j^2}\prod_{j=1}^k\prod_{t_j=1}^{r_j}\left(1-\frac{1}{p^{t_j}}\right)\ ,
\]
where we put $d_i=r_i+r_{i+1}+\dots+r_k=\dim(\ker(C)\cap\mathrm{Im}(C^{i-1}))=\dim(\ker(C^i))-\dim(\ker(C^{i-1}))$.
\end{pro}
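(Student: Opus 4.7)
My plan is to proceed by induction on the nilpotency index $k$ of $C$, using the $X$-invariance of the kernel filtration established in Lemma~\ref{X-invariant-Ker-Im}. The base case $k=1$ corresponds to $C=0$, where every $X\in GL_n(\mathbb{F}_p)$ satisfies $XCX=C$; the count $\#GL_n(\mathbb{F}_p)$ matches the formula with $r_1=d_1=n$ and all other $r_j=d_j=0$.

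For the inductive step, set $K_1=\ker C$ and pick a linear complement $L$ of $K_1$ inside $V=\mathbb{F}_p^n$, so that $V=K_1\oplus L$. By Lemma~\ref{X-invariant-Ker-Im} every admissible $X$ preserves $K_1$, and $CK_1=0$, so with respect to this decomposition
\[
X=\begin{pmatrix}A & B \\ 0 & \bar X\end{pmatrix},\qquad C=\begin{pmatrix}0 & C_0 \\ 0 & C_L\end{pmatrix}
\]
with $A\in GL(K_1)$, $B\colon L\to K_1$, $\bar X\in GL(L)$, $C_0\colon L\to K_1$, and $C_L\colon L\to L$. Under the isomorphism $L\cong V/K_1$, $\bar X$ is the induced map on the quotient and $C_L$ is the induced nilpotent $\bar C$, whose Jordan parameters are $\bar r_i=r_{i+1}$ and $\bar d_i=d_{i+1}$ and whose nilpotency index is $k-1$. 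A direct block-matrix computation of $XCX$ shows that the equation $XCX=C$ is equivalent to the pair
\[
\bar X C_L\bar X=C_L\qquad\text{and}\qquad AC_0+BC_L=C_0\bar X^{-1}.
\]
By the inductive hypothesis the first equation has exactly $\prod_{j=2}^k \#GL_{r_j}(\mathbb{F}_p)\cdot p^{\sum_{j=2}^k(d_j^2-r_j^2)}$ solutions in $\bar X$.

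For a fixed admissible $\bar X$ I count the admissible pairs $(A,B)$. The subspace $\ker C_L\subset L$ corresponds to $K_2/K_1$, and $C_0|_{\ker C_L}$ is the natural isomorphism $\ker C_L\xrightarrow{\sim}K_1\cap\mathrm{Im}(C)$ onto a $d_2$-dimensional subspace of $K_1$. Since $\bar X$ (and hence $\bar X^{-1}$) preserves $\ker C_L$ by Lemma~\ref{X-invariant-Ker-Im} applied on $L$, evaluating the second equation on $\ker C_L$ shows that $A$ must preserve $K_1\cap\mathrm{Im}(C)$ and equal the specific conjugate $C_0\bar X^{-1}C_0^{-1}$ there. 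Extending $A$ to an invertible endomorphism of $K_1$ then amounts to choosing its restriction to a fixed complement of dimension $r_1=d_1-d_2$ so that the image is transverse to $K_1\cap\mathrm{Im}(C)$; a standard flag count gives $\#GL_{r_1}(\mathbb{F}_p)\cdot p^{r_1 d_2}$ such extensions. For each admissible $A$ the right-hand side of $BC_L=C_0\bar X^{-1}-AC_0$ vanishes on $\ker C_L$, so $B$ is uniquely determined on $\mathrm{Im}(C_L)$ and arbitrary on a complement of dimension $d_2$ in $L$, contributing $p^{d_1 d_2}$ further choices.

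Assembling, the number of $X$ per admissible $\bar X$ is $\#GL_{r_1}(\mathbb{F}_p)\cdot p^{d_2(r_1+d_1)}$, and the identity $d_2(r_1+d_1)=(d_1-r_1)(d_1+r_1)=d_1^2-r_1^2$ combined with the inductive count yields exactly the claimed formula. The main obstacle I expect is the geometric identification of $C_0|_{\ker C_L}$ with the isomorphism $K_2/K_1\xrightarrow{\sim}K_1\cap\mathrm{Im}(C)$, together with the verification that the constraint forces $A$ to preserve this subspace with a specific prescribed restriction. Once this is in place the rest of the count is routine linear algebra over $\mathbb{F}_p$.
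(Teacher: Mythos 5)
Your proposal is correct and takes a genuinely different route from the paper's proof. The paper builds $X$ bottom-up along the kernel filtration $\ker C\subset\ker C^2\subset\dots$: it first shows $X|_{\ker C}$ must lie in the parabolic $P_{(r_1,\dots,r_k)}$ preserving the flag $\ker C\cap\mathrm{Im}(C^{j})$, and then counts, layer by layer via an inner induction, the number of ways to extend a given $X_{t-1}\colon\ker C^{t-1}\to\ker C^{t-1}$ to $X_t\colon\ker C^t\to\ker C^t$ still satisfying $XCX=C$. You instead induct on the nilpotency index: you split $V=K_1\oplus L$ with $K_1=\ker C$, write $X$ and $C$ in block upper-triangular form (using the $X$-invariance of $K_1$), derive the pair of conditions $\bar X C_L\bar X=C_L$ and $AC_0+BC_L=C_0\bar X^{-1}$, reduce the first to the quotient problem of index $k-1$, and count the pairs $(A,B)$ for fixed $\bar X$ by observing that $C_0|_{\ker C_L}$ identifies $\ker C_L\cong K_2/K_1$ with $K_1\cap\mathrm{Im}(C)$, that the second equation pins down $A$ on that $d_2$-dimensional subspace to be $C_0\bar X^{-1}C_0^{-1}$, and that $B$ is then free only on a $d_2$-dimensional complement of $\mathrm{Im}(C_L)$ in $L$. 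Your count $\#GL_{r_1}(\mathbb{F}_p)\cdot p^{r_1d_2+d_1d_2}=\#GL_{r_1}(\mathbb{F}_p)\cdot p^{d_1^2-r_1^2}$ per $\bar X$, combined with the inductive hypothesis with shifted parameters $\bar r_i=r_{i+1}$, $\bar d_i=d_{i+1}$, reproduces the stated formula. Both proofs lean on Lemma~\ref{X-invariant-Ker-Im}; yours buys a cleaner global inductive structure and a single-layer linear-algebra count at the cost of a block-matrix bookkeeping, while the paper's buys an explicit flag parametrization of all solutions at once. The point that is least spelled out in your sketch -- that any $A$ with the prescribed restriction to $K_1\cap\mathrm{Im}(C)$ indeed yields an $X$ satisfying $XCX=C$ (so no hidden constraints from $A$ having to preserve the finer pieces $K_1\cap\mathrm{Im}(C^j)$, $j\geq2$) -- does follow because solvability of $BC_L=C_0\bar X^{-1}-AC_0$ only requires the right-hand side to vanish on $\ker C_L$, which your choice of $A|_{K_1\cap\mathrm{Im}(C)}$ guarantees; the finer invariance is then automatic a posteriori from Lemma~\ref{X-invariant-Ker-Im}, so there is no gap.
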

\begin{proof}
First of all, we have $C^k=0$ and $\dim_{\mathbb{F}_p}(\ker(C)\cap \mathrm{Im}(C^i))=\sum_{j=i+1}^k r_j$. By lemma \ref{X-invariant-Ker-Im} the flag $0\leq \ker(C)\cap \mathrm{Im}(C^{k-1})\leq\dots\leq \ker(C)\cap \mathrm{Im}(C^i)\leq \dots \leq\ker(C)\cap \mathrm{Im}(C)\leq \ker(C)$ in $\ker(C)$ must be $X$-invariant for any solution $X$ of the equation $XCX=C$. The set of such maps $X_1:=X_{\mid \ker(C)}\colon \ker(C)\to \ker(C)$ is the parabolic subgroup $P_{(r_1,\dots,r_k)}$ of $\mathrm{GL}_{r_1+\dots+r_k}(\mathbb{F}_p)$ of type $(r_1,\dots,r_k)$ which has cardinality
\[
\# P_{(r_1,\dots,r_k)}=\left(\prod_{j=1}^k(p^{r_j}-1)(p^{r_j}-p)\dots(p^{r_j}-p^{r_j-1})\right)\cdot p^{\sum_{1\leq i<j\leq k}r_ir_j}\ .
\]
\begin{lem}
For any $2\leq t\leq k$ and $X_1\in P_{(r_1,\dots,r_k)}$ the number of extensions of $X_1$ to a one-to-one linear map $X_t\colon \ker(C^t)\to \ker(C^t)$ satisfying 
\begin{enumerate}
\item[$(i)$] $XCX=C$ and 
\item[$(ii)_t$] $\ker(C^i)\cap \mathrm{Im}(C^j)$ is $X_t$-invariant for all $1\leq i\leq t$ and $0\leq j\leq k$
\end{enumerate}
equals
\[ 
\prod_{2\leq j\leq t}p^{r_j(r_1+\dots +r_k)+r_{j+1}(r_2+\dots +r_k)+\dots +r_k(r_{k-j+1}+\dots+r_k)}\ .
\]
\end{lem}
\begin{proof}
We proceed by induction on $t$. Assume we have a map $X_{t-1}\colon \ker(C^{t-1})\to \ker(C^{t-1})$ satisfying $(i)$ and $(ii)_{t-1}$ and pick an element $v\in \ker(C^t)\cap \mathrm{Im}(C^j)$ not lying in $\ker(C^{t-1})+\mathrm{Im}(C^{j+1})$. We need to choose $X_t v\in \ker(C^t)\cap \mathrm{Im}(C^j)$ so that $CX_tv=X_{t-1}^{-1}Cv$ is satisfied since $Cv\in \ker(C^{t-1})\cap \mathrm{Im}(C^{j+1})$ on which subspace the map $X_t^{-1}$ is already defined by $X_{t-1}^{-1}$ (as $X_{t-1}$ is one-to-one). Moreover, $X_{t-1}^{-1}Cv$ lies in $\ker(C^{t-1})\cap \mathrm{Im}(C^{j+1})$ by assumption $(ii)_{t-1}$. In particular, there exists a vector $w\in \mathrm{Im}(C^{j})$ such that $Cw=X_{t-1}^{-1}Cv$ and $w\in \ker(C^t)$ (as we have $C^tw=C^{t-1}X_{t-1}^{-1}Cv=0$). Further, $w$ is unique upto $\ker(C)\cap \mathrm{Im}(C^j)$, so the possible values of $X_tv$ is exactly $w+(\ker(C)\cap \mathrm{Im}(C^j))$ which has cardinality $\# (\ker(C)\cap \mathrm{Im}(C^j))=p^{r_{j+1}+\dots+r_k}$. Finally, we let $v$ run on the lift of a basis of the quotient space $(\ker(C^t)\cap \mathrm{Im}(C^j))/(\ker(C^{t-1})+\mathrm{Im}(C^{j+1}))$ for any $j=k-t,k-t-1,\dots,1,0$ (noting $\mathrm{Im}(C^{k-t+1})\subseteq \ker(C^{t-1})$) we deduce that the number of extensions of $X_{t-1}$ to a map $X_t\colon \ker(C^t)\to \ker(C^t)$ satisfying $(i)$ and $(ii)_t$ is
\begin{align*}
\prod_{j=0}^{k-t}\# (\ker(C)\cap \mathrm{Im}(C^j))^{\dim_{\mathbb{F}_p}(\ker(C^t)\cap \mathrm{Im}(C^j))/(\ker(C^{t-1})+\mathrm{Im}(C^{j+1}))}=\\
p^{r_t(r_1+\dots +r_k)+r_{t+1}(r_2+\dots +r_k)+\dots +r_k(r_{k-t+1}+\dots+r_k)}
\end{align*}
as we have $\dim_{\mathbb{F}_p}(\ker(C^t)\cap \mathrm{Im}(C^j))/(\ker(C^{t-1})+\mathrm{Im}(C^{j+1}))=r_{j+t}$.
\end{proof}
The statement follows from the above lemma by taking $t=k$: the number of solutions of $XCX=C$ in invertible $X$ equals 
\begin{align*}
\# P_{(r_1,\dots,r_k)}\prod_{2\leq j\leq k}p^{r_j(r_1+\dots +r_k)+r_{j+1}(r_2+\dots +r_k)+\dots +r_k(r_{k-j+1}+\dots+r_k)}=\\
\left(\prod_{j=1}^k(p^{r_j}-1)(p^{r_j}-p)\dots(p^{r_j}-p^{r_j-1})\right)\cdot p^{\sum_{j=2}^k(j-1)r_j^2+\sum_{1\leq i<j\leq k}2ir_ir_j}=\\
\left(\prod_{j=1}^k(p^{r_j}-1)(p^{r_j}-p)\dots(p^{r_j}-p^{r_j-1})\right)\cdot p^{\sum_{j=1}^k(d_j^2-r_j^2)} 
\end{align*}
as claimed.
\end{proof}

\begin{cor}\label{cor:nilp-estimate}
Assume $C$ is nilpotent. Then we have $n(C,C;p)\leq p^{\sum_{j=1}^kd_j^2}\leq p^{\mathrm{rk}(C)^2+(n-\mathrm{rk}(C))^2}$.
\end{cor}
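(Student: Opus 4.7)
The plan is to read off both inequalities directly from the exact formula of Proposition~\ref{formula-nilpotent}, combined with two elementary observations. For the first inequality, since each factor $1 - p^{-t_j}$ in the product $\prod_{j=1}^k\prod_{t_j=1}^{r_j}(1 - p^{-t_j})$ lies in $(0,1]$, I would simply drop this product from the identity
\[
n(C,C,p) \;=\; p^{\sum_{j=1}^k d_j^2}\prod_{j=1}^k\prod_{t_j=1}^{r_j}\left(1 - \frac{1}{p^{t_j}}\right),
\]
yielding $n(C,C,p)\leq p^{\sum_j d_j^2}$ at once.

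For the second inequality I would re-express the exponent in terms of $\mathrm{rk}(C)$. Since $C$ is nilpotent with $C^k=0$, the telescoping $d_i = \dim\ker(C^i)-\dim\ker(C^{i-1})$ gives $d_1+\cdots+d_k = \dim\ker(C^k) = n$, while $d_1 = \dim\ker(C) = n - \mathrm{rk}(C)$; in particular $d_2+\cdots+d_k = \mathrm{rk}(C)$. Because all $d_j$ are non-negative integers, expanding the square gives
\[
\sum_{j=2}^k d_j^2 \;\leq\; \Big(\sum_{j=2}^k d_j\Big)^2 \;=\; \mathrm{rk}(C)^2,
\]
and adding $d_1^2 = (n-\mathrm{rk}(C))^2$ to both sides yields $\sum_{j=1}^k d_j^2 \leq \mathrm{rk}(C)^2 + (n-\mathrm{rk}(C))^2$. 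Raising $p$ to this inequality completes the proof.

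There is no serious obstacle here: the corollary is a packaging of Proposition~\ref{formula-nilpotent} into a clean form suitable for use in the general bound of Theorem~\ref{thm:quadr-matrix-bound}. The only point requiring a moment's care is the translation between the $d_j$'s (defined in the statement as kernel-dimension increments) and the geometric quantities $\dim\ker C = n-\mathrm{rk}(C)$ and $\mathrm{rk}(C)$, which is immediate once one recalls $C^k=0$.
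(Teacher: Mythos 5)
Your proof is correct and follows essentially the same route as the paper: both read the first inequality off the exact formula of Proposition~\ref{formula-nilpotent} by discarding the $\prod_j\prod_{t_j}(1-p^{-t_j})\leq 1$ factor (the paper phrases this as bounding each $p^{r_j}-p^i$ by $p^{r_j}$, which is the same estimate), and both obtain the second inequality from $d_1=n-\rk(C)$, $\sum_j d_j=n$, and $\sum_{j\geq 2}d_j^2\leq(\sum_{j\geq 2}d_j)^2$. No gap.
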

\begin{proof}
Using Proposition \ref{formula-nilpotent} we compute
\begin{align*}
n(C,C;p)=\left(\prod_{j=1}^k(p^{r_j}-1)(p^{r_j}-p)\dots(p^{r_j}-p^{r_j-1})\right)\cdot p^{\sum_{j=1}^k(d_j^2-r_j^2)}\leq\\ \leq\left(\prod_{j=1}^k(p^{r_j})^{r_j})\right)\cdot p^{\sum_{j=1}^k(d_j^2-r_j^2)}
=p^{\sum_{j=1}^kd_j^2}\leq p^{d_1^2+(n-d_1)^2}=p^{\mathrm{rk}(C)^2+(n-\mathrm{rk}(C))^2}
\end{align*}
by noting $d_1=\dim\ker(C)=n-\mathrm{rk}(C)$ and $\sum_{j=1}^kd_j=n$.
\end{proof}
\begin{rem}
For fixed $n$ and $p\to\infty$ the above upper estimate $p^{\sum_{j=1}^k d_j^2}$ is in fact the order of magnitude of $n(C,C,p)$:
\begin{align*}
\frac{n(C,C;p)}{p^{\sum_{j=1}^k d_j^2}}=\prod_{j=1}^k\prod_{t_j=1}^{r_j}\left(1-\frac{1}{p^{t_j}}\right)>\left(1-\frac{1}{p}\right)^n\ .
\end{align*}
\end{rem}

\section{The proofs of the bounds}\label{sec:general-bounds}

\subsection{The equation \(AX\equiv X^{-1}B\) modulo prime powers}\quad

Assume that \(A,B\in M_n(\Z)\). We are interested in estimating the size of the affine variety \(V_{A,B}(p^l)\) where 

\begin{equation}\label{eq:V_{A,B}}
V_{A,B}(p^l)=\{ X\in GL_n(\mathbb{Z}/p^l\mathbb{Z})\mid AX = X^{-1}B\}.
\end{equation}

We will collect elements of 
\(V_{A,B}(p^{l+1})\) according to their image in \(V_{A,B}(p^l)\). The final push down to \(l=1\) will play a special role and we let
\begin{equation}\label{eq:V_{A,B}^C}
V_{A,B}^{(C)}(p^l)=\{ X\in V_{A,B}(p^l) \mid AX\equiv X^{-1}B \equiv C \Mod p\}.
\end{equation}

Let now \(X_0 \in V_{A,B}^{(C)}(p^l)\) be given. Then all \(X \in V_{A,B}(p^{l+1})\)  such that \(X\equiv X_0 \Mod p^l\) may be written as 
\(X=X_0(I+p^l Y)\), for some \(Y \Mod p\). The goal is to bound \(Y\) for which   (\ref{eq:V_{A,B}}) also holds  \(\Mod p^{l+1}\). This leads to
\begin{equation*}
AX_0 Y+Y X_0^{-1}B \equiv (X_0^{-1}B-AX_0)/p^l \Mod p.
\end{equation*}
Since \(X_0 \in V_{A,B}^{(C)}(p^l)\) we have that \(Y\) is a solution to the Sylvester equation
\begin{equation}\label{eq:sylvester}
CY+Y C \equiv (X_0^{-1}B-AX_0)/p^l \Mod p.
\end{equation}
Note that the equation above might have no solution, or exactly as many solution as 
\[
CY=-YC \Mod p
\]
for which we may apply Proposition~\ref{lem:Sylvester-nilpotent} and its corollary. This gives

\begin{lem}\label{lem:sylvester-triv-up} Let \( \rk C = r\), \(\rk_\infty C=r_\infty\). If \(l\geq 1\) and \(p\neq 2\) then
\[
\#V_{A,B}^{(C)}(p^{l+1}) \leq p^{(n-r)(n-r_\infty)+r_\infty^2/2}\#V_{A,B}^{(C)}(p^{l}).
\]



\end{lem}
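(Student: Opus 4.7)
The plan is to reduce the count of lifts to solving an inhomogeneous Sylvester equation modulo $p$, then apply Proposition \ref{prop:Sylvester-kernel} and estimate the resulting eigenspace sum via an AM-GM pairing trick.

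First I would use the parametrization already set up in the text: given $X_0\in V_{A,B}^{(C)}(p^l)$, every lift $X\in V_{A,B}^{(C)}(p^{l+1})$ with $X\equiv X_0\pmod{p^l}$ has the form $X=X_0(I+p^lY)$ for a unique $Y\pmod p$, and expanding $AX\equiv X^{-1}B\pmod{p^{l+1}}$ yields the Sylvester-type equation $CY+YC\equiv M\pmod p$ with $M=(X_0^{-1}B-AX_0)/p^l$. Thus each fiber over a fixed $X_0$ contains either $0$ lifts, or exactly as many as the number of solutions of the homogeneous equation $CY+YC\equiv 0\pmod p$. So it suffices to prove
\[
\#\{Y\in M_n(\mathbb{F}_p)\mid CY+YC\equiv 0\pmod p\}\;\leq\; p^{(n-r)^2+r_\infty^2/2}.
\]

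By Proposition \ref{prop:Sylvester-kernel}, the left hand side equals $p^{S}$, where
\[
S=\sum_{\lambda\in\overline{\mathbb{F}_p}} d(\lambda)\,d(-\lambda),\qquad d(\lambda)=\dim\ker(C-\lambda I).
\]
I would split this sum according to whether $\lambda=0$. The contribution from $\lambda=0$ is $d(0)^2=(\dim\ker C)^2=(n-r)^2$, which gives the first summand in the desired exponent.

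For the remaining contribution $\sum_{\lambda\neq 0}d(\lambda)d(-\lambda)$, the key input is that each geometric eigenspace for a nonzero eigenvalue lies inside the stable image $\mathrm{Im}(C^\infty)$ (by Fitting's decomposition), and since these eigenspaces are independent we have $\sum_{\lambda\neq 0}d(\lambda)\leq r_\infty$. Assuming $p\neq 2$, the nonzero eigenvalues pair up as $\{\lambda,-\lambda\}$ with $\lambda\neq -\lambda$, and each pair contributes $2\,d(\lambda)d(-\lambda)\leq\tfrac{1}{2}\bigl(d(\lambda)+d(-\lambda)\bigr)^2$ by AM-GM. Summing over pairs, using that a sum of squares of nonnegatives is bounded by the square of their sum, gives
\[
\sum_{\lambda\neq 0}d(\lambda)d(-\lambda)\;\leq\;\tfrac{1}{2}\sum_{\text{pairs}}\bigl(d(\lambda)+d(-\lambda)\bigr)^2\;\leq\;\tfrac{1}{2}\Bigl(\sum_{\lambda\neq 0}d(\lambda)\Bigr)^{\!2}\;\leq\;\tfrac{r_\infty^2}{2}.
\]
Combining the two contributions yields $S\leq (n-r)^2+r_\infty^2/2$, which is the claim.

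The main obstacle, and the only non-formal step, is extracting the factor of $1/2$ in the $r_\infty^2/2$ bound: the naive estimate $d(\lambda)d(-\lambda)\leq\tfrac12(d(\lambda)^2+d(-\lambda)^2)$ only gives $r_\infty^2$, and one must exploit that the pairing $\lambda\leftrightarrow -\lambda$ collapses two terms into one before applying the square-of-sum estimate. This is what forces the hypothesis $p\neq 2$ in subsequent applications (in characteristic $2$ the pairing is trivial and the same bound fails).
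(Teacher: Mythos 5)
Your proof is correct and fills in the details that the paper leaves implicit: the paper's entire proof of this lemma is the one-line assertion that ``The worst case is when $C$ is diagonalizable with eigenvalues $0,\lambda,-\lambda$ with respective multiplicities $n-r$, $r/2$, $r/2$,'' and your argument is exactly the rigorous justification of why that claim holds. The reduction of the lift count to the homogeneous Sylvester kernel, the invocation of Proposition~\ref{prop:Sylvester-kernel}, the split into $\lambda=0$ versus $\lambda\neq 0$, the observation that nonzero eigenspaces live inside the stable image so that $\sum_{\lambda\neq 0}d(\lambda)\leq r_\infty$, and the pairing/AM-GM step to extract the factor of $\tfrac12$ are all sound and exactly what is needed to substantiate the paper's ``worst case'' remark. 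Your closing observation that the pairing $\lambda\leftrightarrow-\lambda$ degenerates in characteristic $2$ (so the lemma as stated silently requires $p\neq2$, consistent with the hypothesis carried in Theorem~\ref{thm:quadr-matrix-explicit} and Proposition~\ref{pro:matrix-gauss}) is a genuine and correct refinement of what the paper says; for $p=2$ and $C=I$ one has $S=n^2>n^2/2$, so the stated inequality really does fail there.
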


\begin{proof}
This is merely a restatement of Lemma~\ref{lem:Sylvester-nilpotent} and (\ref{eq:Sylvester-triv-bound-2}). First note that \[
\dim \ker C =n-r \text{ and } \dim \ker C^n=n-r_\infty.
\]

To simplify the contribution of the non-zero eigenvalues in (\ref{eq:Sylvester-triv-bound-2}) use that 
\[
2 d_\infty(A-\lambda)d_\infty(A+\lambda)
\leq  
\frac{
\bigl( d_\infty(A-\lambda)+d_\infty(A+\lambda)
\bigr)^2}{2}
\]
and
that for \(a_1,...,a_k\) positive integers, \( a_1^2+...+a_k^2\leq \left( a_1+...+a_k \right)^2 \).
\end{proof}

This estimate is wasteful since the solution set could be empty. However, this will suffice for us.

\begin{proof}[The proof of Theorem~\ref{thm:quadr-matrix-bound}] When \(l=1\) the bound for \(N^*(C,C;p)\) follows from Theorem~\ref{thm:quadr-matrix-explicit} together with exact formulae in Proposition~\ref{prop:centralizer} which gave Corollaries \ref{cor:reg-estimate} and \ref{cor:nilp-estimate}. 

To see this, note that we may decompose \(C\) (over the ground field \(\mathbb{F}_p\)) as a block matrix with one block invertible of size \(r_\infty\) and one block nilpotent of size \( n-r_\infty \). 

For the invertible part we have the upper bound \( 2^{r_\infty} p^{r_\infty^2/2} \) using Corollary \ref{cor:reg-estimate} for each irreducible factor $\neq X$ of the minimal polynomial of $C^2$, noting that there at most $r_\infty$ such factors, and applying \( a_1^2+...+a_k^2\leq \left( a_1+...+a_k \right)^2 \) for positive integers \(a_1,...,a_k\). The nilpotent block has rank \(r-r_\infty\) and so by Corollary \ref{cor:nilp-estimate} we have the upper bound 
\( p^{(n-r)^2 + (r-r_\infty)^2} \) from which
\begin{equation}\label{eq:thm1.6_l=1}
N^*(C,C;p) \leq 2^{r_\infty} p ^{(n-r)^2 + (r-r_\infty)^2 + r_\infty^2/2} .    
\end{equation}
This proves the first statement in case \(l=1\). Whenever \( l>1\) we use Lemma~\ref{lem:sylvester-triv-up} inductively to get that
\[
N^*(C,C;p^l)\leq 
2^{r_\infty} p^{ e(l,n,r,r_\infty)}
\]
where
\[
e(l,n,r,r_\infty)= (n-r)^2 + (r-r_\infty)^2 + r_\infty^2/2 +(l-1)\left((n-r)(n-r_\infty) + r_\infty^2/2\right).
\]

In order to prove the second statement, note that unless \(N^*(A,B;p^l)=0\), we find a common value \(C:=AX_0=X_0^{-1}B\) such that \(N^*(A,B;p^l)=N^*(C,C;p^l)\) and put \(r_\infty:=r_\infty(C)\). Moreover, from Lemma~\ref{lem:unipotent-unique} the value of \(r_\infty\) is the same for any of the 
\( C \)-s that arise. So we are bound to estimate \(e(l,n,r,r_\infty)\).

To simplify the exponent assume first that \( n/2\leq r \leq n \). A calculation shows that the maximum of the function
\[
(1-x)^2 + (x-y)^2 + y^2/2 +(l-1)\left( (1-x)(1-y) + y^2/2 \right)
\]
on the domain 
\begin{equation}\label{eq:D-domain-def}
D=\{(x,y) \mid 1/2\leq x \leq 1,\; 0 \leq y \leq x\} \  .
\end{equation}
is \(l/2\), proving the claim in this case.

For \(r< n/2\) we use that \( (n-r)(n-r_\infty) + r_\infty^2/2 \leq n(n-r)\), and so
\[
e(l,n,r,r_\infty) \leq ln(n-r)
\]
in view of \((n-r)^2+r^2-n(n-r)=r(2r-n)\leq 0.
\)

This establishes both bounds in (\ref{thm:1.6-part2}). 

Finally to prove the universal bound \( N^*(A,B;p^l)\leq 2^np^{l(n^2-n)}\) note that it holds trivially for \(l\geq 2\), since \(n^2-n\geq n^2/2\). 

For \(N^*(A,B;p)\) start with the bound in (\ref{eq:thm1.6_l=1}). Note that if \(r=n\) then also \(r_\infty=n\) and so it is enough to prove that, for \( 0 \leq r_\infty \leq r\), \(1 \leq r \leq n-1\), we have 
\[
(n-r)^2 + (r-r_\infty)^2 + r_\infty^2/2 \leq n^2-n.
\]
However since \(0\leq r_\infty\leq r\) we have
\[
 (r-r_\infty)^2 + r_\infty^2/2 \leq r^2 \ ,  
\]
and for \(1 \leq r \leq n-1\)
\[
(n-r)^2+r^2 \leq n^2-2n+2. 
\]
Finally \(n^2-2n+2 \leq n^2-n \) holds since \(n\geq 2\) .
\end{proof}

\subsection{Gauss sums of matrices}

There are various ways exponential sums with quadratic functions of the entries of an \(n \times n\) matrix arise. For example in the theory of Siegel modular forms \(Q(X)=\Tr X^tAX\), and the associated Gauss sums play an important role see e.g \cite{Lynne}. These have a very different flavor than ours, as the tensor properties allow one to diagonalize \(A\), which immediately yields a diagonalization of the quadratic form \(Q(x_{11},x_{12},\dots,x_{nn})\).
This approach is not directly applicable to our situation since we have \(Q(X)= \Tr TX^2\) for some matrix \(T\). While this case appeared in the literature, see e.g. \cite{Kuroda} our treatment is based directly on Proposition~\ref{pro:gauss} and its corollary \ref{cor:Gauss-sum}.

\begin{proof}[The proof of Proposition~\ref{pro:matrix-gauss}]
We have to estimate the sum
\[ 
S_{A,B}(X;p) = \sum_{U \Mod p} \psi((SU+TU^2)/p), 
\] 
where \(X\) is such that \(AX\equiv X^{-1}B \Mod p^l\), and where \(S=(AX-X^{-1}B)/p^l\) and \(T=AX \Mod p\). This is clearly a general Gauss sum. To apply Corollary \ref{cor:Gauss-sum} let 
\(B(U,Y)=Q(U+Y)-Q(U)-Q(Y)=\Tr((TY+YT)U )\) be the associated bilinear form.

We have that either \(S_{A,B}(X;p)=0\) or there exists \(Y\) such that \(\Tr SU= B(U,Y) \) for some \(Y\) in which case
\[
S_{A,B}(X;p)=\left(\frac{\det(Q_1)}{p}\right) e^{-2\pi iQ(Y)/p} p^{n^2-R}\ , 
\]
where \(R\) is the rank of the quadratic form \(Q(X)=\Tr (TX^2)\). 

Whether there exists \(Y\) such that $\Tr SU=B(U,Y)$ for all $U$ is again determined by the solubility of a Sylvester  equation 
\[
TY+YT=S.
\]
Moreover it implies that the rank of \(Q\) is \(R=n^2-K\), where \(K=\# \{ Y: TY+YT\equiv 0 \mod p\}\). This is estimated as in Lemma \ref{lem:sylvester-triv-up} using Lemma~\ref{lem:Sylvester-nilpotent} which gives the claim. 
\end{proof}

\subsection{Bounding \(K_n(A,B;p^k)\)}

\begin{proof}[The proof of Theorem \ref{thm:main}] The case of \(k=1\) was handled in \cite{E-T}. 

When \(k=2l\) we have $|K_n(A,B;p^k)|\leq p^{ln^2}N^*(A,B;p^l)$ by Proposition \ref{pro:red} noting that any mod $p^l$ solution of the matrix equation $AX\equiv X^{-1}B\pmod{p^l}$ has exactly $p^{ln^2}$ lifts to  $GL_n(\mathbb{Z}/p^k\mathbb{Z})$. 

Similarly, in case \(k=2l+1\) ($l\geq1$) we deduce $|K_n(A,B;p^k)|\leq p^{ln^2}N^*(A,B;p^l)\max_X|S_{A,B}(X;p)|$ from Proposition \ref{pro:red}. The estimate for the Gauss sum $S_{A,B}(X;p)$ is given in Proposition \ref{pro:matrix-gauss} while the estimate for $N^*(A,B;p^l)$ is in Theorem \ref{thm:quadr-matrix-bound} both in the general case and under the stronger assumption $\gcd(\det A, \det B,p)=1$.
\end{proof}


\begin{thebibliography}{99}
\bibitem{Gauss-sums} Berndt, Bruce C., Kenneth S. Williams, and Ronald J. Evans. "Gauss and Jacobi sums." \emph{Wiley} (1998).

\bibitem{Cochrane-Zheng} Cochrane, Todd, and Zhiyong Zheng. "A survey on pure and mixed exponential sums modulo prime powers.", in: \emph{Surveys in Number Theory} Crc Press (2009).

\bibitem{Dabrowksi-Fisher} Dabrowski, Romuald, and Benji Fisher. "A stationary phase formula for exponential sums over $\mathbb{Z}/p^{m} \mathbb{Z} $ and applications to \(GL (3)\)-Kloosterman sums." \emph{Acta Arith.} \textbf{80}(1) (1997), 1--48.

\bibitem{Duan} Duan, Guang-Ren, "Generalized Sylvester equations: unified parametric solutions", Crc Press (2019).

\bibitem{E-B-L-S} El Baz, Daniel, Lee, Min and Strömbergsson, Andreas, "Effective equidistribution of rational points on expanding horospheres", \href{https://arxiv.org/abs/2212.07408}{arXiv:2212.07408}

\bibitem{E-T} Erdélyi, Márton and Tóth, Árpád, "Matrix Kloosterman sums", to appear in \emph{Algebra \& Number Theory}, \href{https://arxiv.org/abs/2109.00762}{arXiv:2109.00762}

\bibitem{Kuroda} Kuroda, Mitsuru. "Quadratic Gauss sums on matrices." \emph{Linear algebra and its applications} \textbf{384} (2004), 187--198.

\bibitem{Milnor}
Milnor, John Willard, and Dale Husemoller, "Symmetric bilinear forms", Vol. \textbf{73}, Berlin, Springer (1973).

\bibitem{Lee-Marklof} Lee, Min and Marklof, Jens, "Effective equidistribution of rational points on expanding horospheres", \emph{International Mathematics Research Notices} 2018.21 (2018), 6581--6610.


\bibitem{Morrison} Morrison, Kent E., "Integer Sequences and Matrices Over Finite Fields", \emph{Journal of Integer Sequences} \textbf{9}(2) (2006), 3.


\bibitem{Penrose} Penrose, Roger, "A generalized inverse for matrices", \emph{Mathematical proceedings of the Cambridge philosophical society} \textbf{51}(3), Cambridge University Press (1955).


\bibitem{Potter} Potter, James E., "Matrix quadratic solutions", \emph{SIAM Journal on Applied Mathematics} \textbf{14}(3) (1966), 496--501.


\bibitem{Rosen} Rosen, Michael, "Number theory in function fields", Vol. \textbf{210}, Springer Science \& Business Media (2002).

\bibitem{Salie} Salié, Hans. "Über die Kloostermanschen Summen \(S(u,v;q)\)." \emph{Mathematische Zeitschrift} \textbf{34}(1) (1932), 91--109.

\bibitem{Stanley} Stanley, Richard P., "Enumerative Combinatorics Volume 1 second edition", Cambridge studies in advanced mathematics (2011).

\bibitem{Springer-Steinberg} Springer, Thomas A. and Steinberg, Robert, "Conjugacy classes" in: \emph{Seminar on Algebraic Groups and Related Finite Groups} (The Institute for Advanced Study, Princeton, N.J., 1968/69), pages 167--266. Springer, Berlin, 1970. Lecture Notes in Mathematics, Vol. 131.

\bibitem{Sylvester} Sylvester, James Joseph. "Sur l’équation en matrices px= xq." \emph{CR Acad.\ Sci.\ Paris} \textbf{99}(2) (1884), 67--71.

\bibitem{Lynne} Walling, Lynne H. "Explicitly realizing average Siegel theta series as linear combinations of Eisenstein series." \emph{The Ramanujan Journal} \textbf{47}(3) (2018), 475--499.
\end{thebibliography}
\end{document}